\documentclass[11pt]{article}
\usepackage[margin=1in]{geometry}
\usepackage{float,longtable}
\usepackage{enumitem,mathtools,microtype,setspace}
\usepackage[dvipsnames]{xcolor}
\definecolor{BasisMassLink}{RGB}{22,67,114}
\definecolor{BasisMassCite}{RGB}{35,102,78}
\definecolor{BasisMassURL}{RGB}{0,91,150}
\usepackage[
  colorlinks=true,
  linkcolor=BasisMassLink,
  citecolor=BasisMassCite,
  urlcolor=BasisMassURL,
  linktoc=all
]{hyperref}
\makeatletter

\RequirePackage{amsmath}
\RequirePackage{amssymb}
\RequirePackage{amsthm}
\RequirePackage{bm}
\usepackage{bbm}
\RequirePackage{url}
\usepackage{natbib}
\usepackage{multirow}
\usepackage{graphicx}
\usepackage{subfigure}
\usepackage{makecell}
\usepackage{booktabs}
\usepackage{array}
\usepackage{algorithm}
\usepackage{algorithmic}
\usepackage{dsfont}
\RequirePackage[dvipsnames]{xcolor}
\RequirePackage[capitalise,noabbrev,nameinlink]{cleveref}

\newcommand{\cD}{\mathcal{D}}

\newcommand{\cM}{\mathcal{M}}

\newtheoremstyle{mytheoremstyle} 
    {\topsep}                    
    {\topsep}                    
    {\normalfont}                   
    {}                           
    {\bfseries}                   
    {.}                          
    {.5em}                       
    {}  

\theoremstyle{mytheoremstyle}
\theoremstyle{plain}
\crefname{assumption}{Assumption}{Assumptions}
\crefname{theorem}{Theorem}{Theorems}
\Crefname{theorem}{Theorem}{Theorems}
\crefname{lemma}{Lemma}{Lemmas}
\Crefname{lemma}{Lemma}{Lemmas}
\crefname{proposition}{Proposition}{Propositions}
\Crefname{proposition}{Proposition}{Propositions}
\crefname{definition}{Definition}{Definitions}
\Crefname{definition}{Definition}{Definitions}
\crefname{corollary}{Corollary}{Corollaries}
\Crefname{corollary}{Corollary}{Corollaries}
\crefname{remark}{Remark}{Remarks}
\Crefname{remark}{Remark}{Remarks}
\crefname{example}{Example}{Examples}
\Crefname{example}{Example}{Examples}
\crefname{conjecture}{Conjecture}{Conjectures}
\Crefname{conjecture}{Conjecture}{Conjectures}
\crefname{fact}{Fact}{Facts}
\Crefname{fact}{Fact}{Facts}
\crefname{claim}{Claim}{Claims}
\Crefname{claim}{Claim}{Claims}
\Crefname{assumption}{Assumption}{Assumptions}
\crefname{condition}{Condition}{Conditions}
\Crefname{condition}{Condition}{Conditions}
\crefname{equation}{Equation}{Equations}
\Crefname{equation}{Equation}{Equations}
\crefname{table}{Table}{Tables}
\Crefname{table}{Table}{Tables}
\crefname{section}{Section}{Sections}
\Crefname{section}{Section}{Sections}
\crefname{subsection}{Section}{Sections}
\Crefname{subsection}{Section}{Sections}

\ifx\BlackBox\undefined
\newcommand{\BlackBox}{\rule{1.5ex}{1.5ex}}  
\fi

\ifx\QED\undefined
\def\QED{~\rule[-1pt]{5pt}{5pt}\par\medskip}
\fi

\ifx\proof\undefined
\newenvironment{proof}{\par\noindent{\bf Proof\ }}{\hfill\BlackBox\\[2mm]}
\fi
\ifx\solution\undefined
\newenvironment{solution}{\par\noindent{\bf Solution\ }}{\hfill\BlackBox\\[2mm]}
\fi

\ifx\theorem\undefined
\theoremstyle{plain}
\newtheorem{theorem}{Theorem}
\fi
\ifx\example\undefined
\theoremstyle{definition}
\newtheorem{example}[theorem]{Example}
\fi
\ifx\property\undefined
\theoremstyle{plain}
\newtheorem{property}{Property}
\fi
\ifx\lemma\undefined
\theoremstyle{plain}
\newtheorem{lemma}[theorem]{Lemma}
\fi
\ifx\proposition\undefined
\theoremstyle{plain}
\newtheorem{proposition}[theorem]{Proposition}
\fi
\ifx\remark\undefined
\theoremstyle{remark}
\newtheorem{remark}[theorem]{Remark}
\fi
\ifx\corollary\undefined
\theoremstyle{plain}
\newtheorem{corollary}[theorem]{Corollary}
\fi
\ifx\definition\undefined
\theoremstyle{definition}
\newtheorem{definition}[theorem]{Definition}
\fi
\ifx\conjecture\undefined
\theoremstyle{plain}
\newtheorem{conjecture}[theorem]{Conjecture}
\fi
\ifx\fact\undefined
\theoremstyle{plain}
\newtheorem{fact}[theorem]{Fact}
\fi
\ifx\claim\undefined
\theoremstyle{plain}
\newtheorem{claim}[theorem]{Claim}
\fi
\ifx\assumption\undefined
\theoremstyle{definition}
\newtheorem{assumption}[theorem]{Assumption}
\fi
\ifx\condition\undefined
\theoremstyle{definition}
\newtheorem{condition}[theorem]{Condition}
\fi

\let\orig@example\example \let\endorig@example\endexample
\renewenvironment{example}{\crefalias{theorem}{example}\orig@example}{\endorig@example}

\let\orig@lemma\lemma \let\endorig@lemma\endlemma
\renewenvironment{lemma}{\crefalias{theorem}{lemma}\orig@lemma}{\endorig@lemma}

\let\orig@proposition\proposition \let\endorig@proposition\endproposition
\renewenvironment{proposition}{\crefalias{theorem}{proposition}\orig@proposition}{\endorig@proposition}

\let\orig@remark\remark \let\endorig@remark\endremark
\renewenvironment{remark}{\crefalias{theorem}{remark}\orig@remark}{\endorig@remark}

\let\orig@corollary\corollary \let\endorig@corollary\endcorollary
\renewenvironment{corollary}{\crefalias{theorem}{corollary}\orig@corollary}{\endorig@corollary}

\let\orig@definition\definition \let\endorig@definition\enddefinition
\renewenvironment{definition}{\crefalias{theorem}{definition}\orig@definition}{\endorig@definition}

\let\orig@conjecture\conjecture \let\endorig@conjecture\endconjecture
\renewenvironment{conjecture}{\crefalias{theorem}{conjecture}\orig@conjecture}{\endorig@conjecture}

\let\orig@fact\fact \let\endorig@fact\endfact
\renewenvironment{fact}{\crefalias{theorem}{fact}\orig@fact}{\endorig@fact}

\let\orig@claim\claim \let\endorig@claim\endclaim
\renewenvironment{claim}{\crefalias{theorem}{claim}\orig@claim}{\endorig@claim}

\let\orig@assumption\assumption \let\endorig@assumption\endassumption
\renewenvironment{assumption}{\crefalias{theorem}{assumption}\orig@assumption}{\endorig@assumption}

\let\orig@condition\condition \let\endorig@condition\endcondition
\renewenvironment{condition}{\crefalias{theorem}{condition}\orig@condition}{\endorig@condition}

\numberwithin{equation}{section}
\numberwithin{theorem}{section}

\makeatother
\numberwithin{theorem}{section}

\crefname{equation}{Equation}{Equations}
\Crefname{equation}{Equation}{Equations}
\crefname{section}{Section}{Sections}
\Crefname{section}{Section}{Sections}
\crefname{subsection}{Section}{Sections}
\Crefname{subsection}{Section}{Sections}
\crefname{subsubsection}{Section}{Sections}
\Crefname{subsubsection}{Section}{Sections}
\crefname{table}{Table}{Tables}
\Crefname{table}{Table}{Tables}
\crefname{appendix}{Appendix}{Appendices}
\Crefname{appendix}{Appendix}{Appendices}

\usepackage{needspace}
\usepackage{etoolbox}
\pretocmd{\section}{\Needspace{8\baselineskip}}{}{}
\pretocmd{\subsection}{\Needspace{6\baselineskip}}{}{}

\newcommand{\DR}{\operatorname{DR}}
\newcommand{\bbE}{\mathbb E}
\newcommand{\bbP}{\mathbb P}
\newcommand{\St}{\mathrm V}
\newcommand{\Gram}{\operatorname{Gram}}

\title{Sharp Minimax Gram-Determinant Asymptotics\\
for Labelled Unit Decompositions of the Identity\\
and Recent Developments on Makai's Conjecture}
\author{Haozhou Zhang\\
Department of Mathematics and Statistics\\
University of Idaho\\
\href{mailto:zhan3063@vandals.uidaho.edu}{\texttt{zhan3063@vandals.uidaho.edu}}}
\date{9 October 2026}
\hypersetup{
  pdftitle={Sharp Minimax Gram-Determinant Asymptotics for Labelled Unit Decompositions of the Identity and Recent Developments on Makai's Conjecture},
  pdfauthor={Haozhou Zhang},
  pdfsubject={Haar--Parseval asymptotics, minimax Gram determinants, Dvoretzky--Rogers counterexamples, and Makai's conjecture from strong slicing},
  pdfkeywords={Dvoretzky--Rogers lemma, Parseval frame, Stiefel manifold,
               Gram determinant, decomposition of the identity, random matrix,
               Makai's conjecture, strong slicing, isotropic constant},
  pdfdisplaydoctitle=true,
  pdflang={en-GB}
}

\begin{document}
\maketitle

\begin{abstract}
  We show that González Merino and Schymura's Conjecture~4.8, a sufficient
  condition for Makai's reverse isodiametric conjecture, fails in all
  sufficiently large dimensions.  Its three-dimensional case follows from
  Aliev's proof.  Pe{\l}czy{\'n}ski and Szarek's construction for enclosing
  parallelepipeds implies this failure.  Using random frames, we also
  determine the sharp exponential determinant guarantee for every fixed
  ratio greater than one between the number of vectors and the dimension.
\end{abstract}

\medskip
\noindent\textbf{Note added on 9 October 2026.}
We note that OpenAI's strong-slicing result, released as item~101
\citep[Theorem~1.1]{OpenAI2026StrongSlicing}, implies Makai's reverse
isodiametric conjecture, including its simplex equality case, as shown in
\cref{app:makai}.  The author has strong confidence in OpenAI's proof, but
does not assume responsibility for the correctness of that external
result.  Following the covariance-normalisation argument given in the
appendix, we establish this implication from their stated theorem.
We also observe that arguments
similar to those used in OpenAI's proof may yield a direct proof of
Makai's conjecture, potentially with simplifications, but the author does
not intend to pursue such an attempt as of current.  The main results of this paper do
not depend on that external input.

\medskip
\noindent\textbf{MSC 2020.}
Primary 52A40; secondary 15B52, 42C15, 60B20.

\smallskip
\noindent\textbf{Keywords.}
Dvoretzky--Rogers lemma; decomposition of the identity; Parseval frame;
Stiefel manifold; Gram determinant; random matrix; Makai's conjecture;
isotropic constant.

\section{Introduction}\label{sec:introduction}

Makai's reverse isodiametric conjecture \citep{Makai1978} asserts that
for every convex body \(K\subseteq\mathbb R^n\) there is an invertible
linear map \(A\) such that
\begin{equation}\label{eq:Makai-conjecture}
 \operatorname{idq}(AK):=
 \frac{\operatorname{vol}_n(AK)}{\operatorname{diam}(AK)^n}
 \ge \frac{\sqrt{n+1}}{n!\,2^{n/2}}.
\end{equation}
The right-hand side is the isodiametric quotient of a regular simplex.
Thus the conjecture asks whether every convex body has a linear image
whose isodiametric quotient is at least that of a regular simplex.

While studying this conjecture, we came across the stronger
simplex-selection Conjecture~4.8 of
\citet{GonzalezMerinoSchymura2020}.  A unit decomposition of the identity
in \(\mathbb R^n\) consists of
\begin{equation}\label{eq:identity-decomposition}
 u_1,\ldots,u_m\in\mathbb S^{n-1},
 \qquad
 \lambda_1,\ldots,\lambda_m\ge0,
 \qquad
 \sum_{i=1}^m\lambda_i u_i u_i^{\top}=I_n.
\end{equation}
Labels remain distinct when their directions coincide, and weights may
vanish.  Let \(\DR(m,n,n)\) be the greatest volume of a simplex
\(\operatorname{conv}\{0,u_{i_1},\ldots,u_{i_n}\}\), with
\(1\le i_1<\cdots<i_n\le m\), guaranteed in every such decomposition,
and put \(d_n=\binom{n+1}{2}\).  In its original
volume formulation, their Conjecture~4.8 asserts that, for every \(n\ge1\),
\begin{equation}\label{eq:GMS-conjecture}
 \DR(d_n,n,n)=\frac{\sqrt{n+1}}{n!\,2^{n/2}}.
\end{equation}
The edge directions of a regular simplex form the \(A_n\) root
decomposition and attain this value, so the proposed bound would be sharp.

The geometric reason for this conjecture is that every convex body can
be put in \emph{Behrend position}, a linear image maximising its
isodiametric quotient.  In this position, its diametrical directions
admit a decomposition of the form \eqref{eq:identity-decomposition} with
at most \(d_n\) vectors.  A volume comparison for the corresponding
diameter segments then gives
\citep[Theorem~2.3 and Corollary~4.7]{GonzalezMerinoSchymura2020}
\[
 \operatorname{idq}(K)\ge\DR(d_n,n,n).
\]
Hence \cref{eq:GMS-conjecture} would imply Makai's conjectured bound.
This approach succeeds in dimension three, as \citet{Aliev2024} proves that
every six-vector unit decomposition of \(I_3\) contains a triple with
\[
 |\det(u_i,u_j,u_k)|\ge\frac1{\sqrt2},
\]
and uses this estimate to prove Makai's bound in \(\mathbb R^3\).
Together with the \(A_3\) example, his proof therefore gives
\(\DR(6,3,3)=1/(6\sqrt2)\), the three-dimensional case of
Conjecture~4.8.

Aliev's result raises the question of whether the same simplex-volume
guarantee holds in higher dimensions.  We show that it fails in all
sufficiently large dimensions, using decompositions built from random
frames.

To study these simplex volumes, we use the Gram matrix of the selected
vectors \(v_1,\ldots,v_n\), namely
\(\Gram(v_1,\ldots,v_n)=(\langle v_i,v_j\rangle)_{i,j=1}^n\).
Its determinant is \((n!V)^2\), where \(V\) is the volume of
\(\operatorname{conv}\{0,v_1,\ldots,v_n\}\).  Thus finding a large
simplex is equivalent to finding a large Gram determinant.  More
generally, we ask:
\nopagebreak[4]
\begin{center}
\begin{minipage}{0.88\linewidth}
\centering\itshape
For a unit decomposition with \(m\ge n\) vectors in dimension \(n\), how
large a Gram determinant can we always guarantee among its
\(n\)-element selections?
\end{minipage}
\end{center}

For an $n$-set $B\subseteq[m]$, where $[m]=\{1,\ldots,m\}$, put
\[
 c_B=\det\Gram(u_i:i\in B),
 \qquad
 \cM(u)=\max_{|B|=n}c_B.
\]
Define the optimal determinant guarantee by
\begin{equation}\label{eq:Dmn-definition}
 \cD(m,n)=\inf\{\cM(u):(u,\lambda)
                  \text{ satisfies \cref{eq:identity-decomposition}}\}.
\end{equation}
The classical Dvoretzky--Rogers lemma guarantees an $n$-element selection
with a large Gram determinant in every such decomposition
\citep{DvoretzkyRogers1950}.  A refinement due to
\citet[Proposition~2.1]{PelczynskiSzarek1991} accounts for the number of
labels: for $n\le m\le\binom{n+1}{2}$, it gives
\[
 \cD(m,n)\ge L_{m,n}:=\frac{m^n}{n^n\binom mn}.
\]
See also \citet[Theorem~5]{FodorNaszodiZarnocz2019} for a probabilistic
proof, and \citet[Theorem~1.5]{GonzalezMerinoSchymura2020} for the
corresponding simplex-volume bound.  We recall the elementary argument in
\cref{sec:universal}, where the displayed bound is proved for all $m\ge n$.

Fix $\alpha>1$ and set
\[
 m_n=\lfloor\alpha n\rfloor,
 \qquad
 f_\alpha=\left(1-\frac1\alpha\right)^{\alpha-1}.
\]
Stirling's formula gives \(L_{m_n,n}^{1/n}\to f_\alpha\).  We prove that
this lower bound is sharp at the exponential scale by constructing Haar
decompositions with the same rate.

For each $n$, let $Q_n$ consist of the first $n$ columns of a Haar-distributed
matrix in $O(m_n)$.  Its rows are nonzero almost surely.  Normalise each row
to unit length, and denote the resulting unit vectors by
$u_{n,1},\ldots,u_{n,m_n}$.  Define
\[
 M_n=\max_{\substack{B\subseteq[m_n]\\|B|=n}}
     \det\Gram(u_{n,i}:i\in B).
\]
Almost surely, the squared row lengths give positive weights satisfying
\cref{eq:identity-decomposition}, so \(\cD(m_n,n)\le M_n\).
We write \(\bbP_n\) for Haar probability on \(O(m_n)\), and omit the
subscript when the dimension is fixed.  For almost-sure statements, realise
the matrices on a common probability space with these Haar marginal laws.
Their dependence across dimensions is arbitrary.

\Needspace{10\baselineskip}
\begin{theorem}\label{thm:main}
For every fixed $\alpha>1$,
\[
 \cD(m_n,n)^{1/n}\longrightarrow f_\alpha.
\]
For any such common-space realisation, the normalised Haar construction
attains this exponential rate almost surely:
\[
 M_n^{1/n}\longrightarrow f_\alpha
 \qquad\text{almost surely}.
\]
\end{theorem}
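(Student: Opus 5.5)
The proof has two halves: a lower bound on \(\cD(m_n,n)\) and an almost-sure upper bound on \(M_n\). Since \(\cD(m_n,n)\le M_n\) almost surely, together they give both statements.

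\emph{Lower bound.} I will use the universal estimate \(\cD(m,n)\ge L_{m,n}=m^n/(n^n\binom mn)\) from \cref{sec:universal}, which holds for all \(m\ge n\). By Stirling, with \(m=m_n=\lfloor\alpha n\rfloor\),
\[
\binom{m_n}{n}^{1/n}\to\frac{\alpha^\alpha}{(\alpha-1)^{\alpha-1}}=\frac{\alpha}{f_\alpha},
\qquad (m_n/n)\to\alpha .
\]
Hence \(L_{m_n,n}^{1/n}\to f_\alpha\), and \(\liminf\cD(m_n,n)^{1/n}\ge f_\alpha\). Combined with \(M_n\ge\cD(m_n,n)\), this also gives \(\liminf M_n^{1/n}\ge f_\alpha\) almost surely. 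So it remains to show \(\limsup M_n^{1/n}\le f_\alpha\) almost surely.

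\emph{Upper bound: reduction to a union bound.} Let \(q_i\) be the rows of \(Q_n\), and \(Q_B\) the \(n\times n\) submatrix on rows \(B\). Since the columns of \(Q_n\) are orthonormal,
\[
c_B=\frac{\det(Q_B)^2}{\prod_{i\in B}|q_i|^2}.
\]
Cauchy--Binet gives \(\sum_B\det(Q_B)^2=1\) and \(\sum_B c_B\prod_{i\in B}\lambda_i=1\) with \(\lambda_i=|q_i|^2\).

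Each \(\lambda_i\) has the \(\mathrm{Beta}(n/2,(m_n-n)/2)\) law. A union bound over the \(m_n\) rows with Beta tail estimates then gives \(\max_i|\lambda_i-1/\alpha|\le\varepsilon\) outside an event of probability \(e^{-cn}\). Consequently, up to factors \(e^{o(n)}\), it suffices to show that for each \(\delta>0\)
\[
\bbP_n\Bigl(\max_{|B|=n}c_B>e^{\delta n}f_\alpha^{\,n}\Bigr)
\]
is summable in \(n\); Borel--Cantelli then applies on the common space, regardless of the dependence across \(n\).

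For each fixed \(B\), scale invariance of \(c_B\) in each row lets me replace \(Q_n\) by an \(m_n\times n\) Gaussian matrix \(G\). The rows of \(Q_n\) are \(g_i^\top(G^\top G)^{-1/2}\), so only the joint law of \(G_B\) and \(G^\top G\) matters. I will then bound the count
\[
\bbE\,\#\{B:c_B>t\}\le\binom{m_n}{n}\,t^{-s}\,\bbE[c_B^s]
\]
and optimise over \(s>0\). The moment \(\bbE[c_B^s]\) should factor into a product of Beta moments, via a Bartlett decomposition of \(\det(Q_B)^2\) together with the nearly constant row norms. This yields an explicit rate
\[
\Lambda_\alpha(s)=\lim\tfrac1n\log\bbE[c_B^s],
\]
computed as a Riemann sum of log-Gamma ratios.

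\emph{Main obstacle.} The hard step is this last optimisation. One needs
\[
\log\frac{\alpha}{f_\alpha}+\Lambda_\alpha(s)-s\log f_\alpha<0
\quad\text{for some }s,
\]
and I am not confident a plain first moment achieves it. The sum identity \(\sum_B\det(Q_B)^2=1\) shows that the mean of \(c_B\) is already about \(\alpha^n/\binom{m_n}{n}\approx f_\alpha^n\). The union bound therefore has to exclude exponentially rare large blocks, and the Markov estimate with \(s>1\) may lose to the entropy factor. If it fails, I would next exploit the dependence between \(\det(Q_B)^2\) and the row norms on \(B\). Concretely, I would bound \(c_B\) through a weighted Cauchy--Binet identity restricted to rows with \(\lambda_i\) close to \(1/\alpha\), and use concentration of \(\log\det\) of the \(B\)-block (Marchenko--Pastur type log-potential of the Jacobi ensemble) to control the maximum more sharply than the plain union bound.
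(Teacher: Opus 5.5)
\paragraph{Verdict: the upper bound is not proved.} Your lower bound, and the skeleton of your upper bound, are the paper's: leverage control, an \(s\)-th moment Markov bound summed over the \(\binom{m_n}{n}\) selections, and Borel--Cantelli on the common space. But you never obtain the moments, and the criterion you write down cannot hold.

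In \(\log\frac{\alpha}{f_\alpha}+\Lambda_\alpha(s)-s\log f_\alpha<0\), the slack \(e^{\delta n}\) from your own threshold has disappeared. At threshold exactly \(f_\alpha^n\), no \(s\) can work, because \(M_n\ge L_{m_n,n}\) and Stirling gives \(L_{m_n,n}/f_\alpha^n\to\infty\). For \(s\ge1\) this also follows from Jensen and your observation \(\bbE c_B\approx f_\alpha^n\): together they force the left side to be at least \(\log(\alpha/f_\alpha)=h_\alpha>0\). The correct requirement is \(h_\alpha+\Lambda_\alpha(s)-s\log f_\alpha-s\delta<0\) for some \(s\). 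Whether this can be met depends entirely on how \(\Lambda_\alpha(s)\) grows in \(s\), which is exactly the point you leave open, with only a vague Jacobi log-potential fallback.

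Separately, \(c_B\) is not unchanged when \(Q_n=G(G^{\top}G)^{-1/2}\) is replaced by \(G\). Row scalings and right orthogonal factors preserve it, but the factor \((G^{\top}G)^{-1/2}\) does not.

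\paragraph{The missing idea: the moment rate is linear in \(s\).} Do not compute \(\bbE c_B^s\), where the leverages on \(B\) are coupled to \(\det Q_B\). Instead, on the event \(\min_i\lambda_i\ge(1-\varepsilon)n/m_n\) (only a lower bound is needed),
\[
 c_B\le\det(Q_B)^2\,[(1-\varepsilon)n/m_n]^{-n}.
\]
So only \(\bbE\det(Q_B)^{2s}\) matters, and it is an exact matrix-beta moment. Write \(\det(Q_B)^2=\det W_1/\det(W_1+W_2)\) for independent Wishart matrices \(W_1=G_B^{\top}G_B\) and \(W_2=G_{B^c}^{\top}G_{B^c}\). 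Then
\[
 \bbE\det(Q_B)^{2s}
 =\prod_{j=1}^n\frac{((n-j+1)/2)_s}{((m-j+1)/2)_s}
 =\prod_{\ell=0}^{s-1}\frac{(n+2\ell)!\,(m-n+2\ell)!}{(2\ell)!\,(m+2\ell)!}.
\]
For \(n<m<2n\) the beta density does not exist. There one first uses Jacobi's complementary-minor identity \(|\det Q_B|=|\det R_{B^c}|\), with \(R\) the remaining \(m-n\) columns of the Haar orthogonal matrix.

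Each factor is \(\binom mn^{-1}\) times a quantity of polynomial size in \(n\). Hence \(\frac1n\log\bbE\det(Q_B)^{2s}\to-sh_\alpha\) for every fixed \(s\): at exponential scale the \(s\)-th moment is just the \(s\)-th power of the mean.

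Now, given \(b>f_\alpha\), fix \(\varepsilon\) with \(b(1-\varepsilon)>f_\alpha\). The leverage loss is \(e^{O(\varepsilon)n}\), not \(e^{o(n)}\), so \(\varepsilon\) must be tied to \(b\). The union-bound exponent is then \(h_\alpha-s\log\frac{b(1-\varepsilon)}{f_\alpha}+o(1)\), which is negative for a fixed large integer \(s\). So your fear that large \(s\) loses to the entropy factor is unfounded. Still, without this computation, or an equivalent faster-than-exponential upper-tail bound for \(\frac1n\log\det(Q_B)^2\), your argument does not establish \(\limsup M_n^{1/n}\le f_\alpha\).
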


In particular, for every \(b>f_\alpha\), all sufficiently large \(n\)
admit a positive-weight decomposition with \(m_n\) labels for which every
\(n\)-element selection has Gram determinant less than \(b^n\).

The simplex-volume guarantee introduced above is related to the
determinant guarantee by
\begin{equation}\label{eq:DR-D-relation}
 \DR(m,n,n)=\frac{\sqrt{\cD(m,n)}}{n!}.
\end{equation}
Consequently, \cref{thm:main} also gives
\[
 \bigl(n!\,\DR(\lfloor\alpha n\rfloor,n,n)\bigr)^{1/n}
 \longrightarrow \sqrt{f_\alpha}.
\]

Put \(\tau_n=(n+1)/2^n\).  By \cref{eq:DR-D-relation}, the
simplex-volume conjecture \eqref{eq:GMS-conjecture} is equivalent to
\[
 \cD(d_n,n)=\tau_n.
\]
In determinant terms, it asserts that the \(A_n\) root decomposition is
extremal, with maximal Gram determinant \(\tau_n\).

Here \(d_n/n\to\infty\), so the fixed-ratio theorem does not apply directly.
Splitting weights among parallel copies allows us to obtain the minimax
limit in this regime as well.

\begin{corollary}\label{cor:diverging-scale}
Let \(r_n\ge n\) be integers satisfying \(r_n/n\to\infty\).  Then
\[
 \cD(r_n,n)^{1/n}\longrightarrow e^{-1},
 \qquad
 \bigl(n!\,\DR(r_n,n,n)\bigr)^{1/n}\longrightarrow e^{-1/2}.
\]
Moreover, for every \(b>e^{-1}\), all sufficiently large \(n\) admit a
positive-weight \(r_n\)-label decomposition with \(\cM(u)<b^n\).
\end{corollary}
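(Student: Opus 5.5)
We prove the corollary by sandwiching \(\cD(r_n,n)^{1/n}\) between a lower bound from the universal estimate and an upper bound from \cref{thm:main} combined with label splitting.

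\textbf{Lower bound.} The universal estimate recalled in \cref{sec:universal} holds for all \(m\ge n\), so
\[
 \cD(r_n,n)\ge L_{r_n,n}=\frac{r_n^n}{n^n\binom{r_n}{n}}.
\]
Using \(\binom{r_n}{n}\le r_n^n/n!\), this gives \(L_{r_n,n}\ge n!/n^n\), and Stirling's formula yields \((n!/n^n)^{1/n}\to e^{-1}\). Hence \(\liminf_n\cD(r_n,n)^{1/n}\ge e^{-1}\), with no use of the hypothesis \(r_n/n\to\infty\).

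\textbf{Monotonicity in the number of labels.} The upper bound rests on the claim that \(\cD(m,n)\) is nonincreasing in \(m\). Let \((u,\lambda)\) be an \(m\)-label decomposition and \(m'>m\). Split the last weight \(\lambda_m\) into \(m'-m+1\) equal positive parts carried by parallel copies of \(u_m\); this keeps all weights positive whenever the original ones were. Any \(n\)-selection containing two parallel copies has Gram determinant \(0\). Any other selection coincides, up to which copy is used, with an \(n\)-selection of the original labels. Hence \(\cM\) is unchanged, and \(\cD(m',n)\le\cD(m,n)\).

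\textbf{Upper bound.} Fix \(\alpha>1\). Since \(r_n/n\to\infty\), we have \(r_n\ge\lfloor\alpha n\rfloor\) for all large \(n\). Monotonicity then gives \(\cD(r_n,n)\le\cD(\lfloor\alpha n\rfloor,n)\), and \cref{thm:main} gives
\[
 \limsup_n\cD(r_n,n)^{1/n}\le f_\alpha=\Bigl(1-\frac1\alpha\Bigr)^{\alpha-1}.
\]
Writing \((\alpha-1)\log(1-1/\alpha)=-(\alpha-1)\bigl(1/\alpha+O(\alpha^{-2})\bigr)\), we see that \(f_\alpha\to e^{-1}\) as \(\alpha\to\infty\). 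Letting \(\alpha\to\infty\) proves \(\cD(r_n,n)^{1/n}\to e^{-1}\).

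\textbf{The \(\DR\) limit.} This follows from \cref{eq:DR-D-relation}:
\[
 \bigl(n!\,\DR(r_n,n,n)\bigr)^{1/n}=\bigl(\cD(r_n,n)^{1/n}\bigr)^{1/2}\to e^{-1/2}.
\]

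\textbf{The "moreover" clause.} Given \(b>e^{-1}\), choose \(\alpha\) with \(f_\alpha<b\). By the remark after \cref{thm:main}, for all large \(n\) there is a positive-weight decomposition with \(\lfloor\alpha n\rfloor\) labels and \(\cM(u)<b^n\). Applying the splitting construction to it produces \(r_n\) labels, keeps all weights positive, and leaves \(\cM(u)\) unchanged, so \(\cM(u)<b^n\) still holds.

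The only step needing care is the splitting lemma, namely checking that Gram determinants of selections containing duplicates vanish and that the remaining selections reproduce the original maximum. The rest is routine Stirling asymptotics.
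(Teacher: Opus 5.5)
Your proof is correct and follows the paper's argument essentially step by step: the universal floor combined with $\binom{r_n}{n}\le r_n^n/n!$ and Stirling's formula, splitting-based monotonicity of $\cD(\cdot,n)$ in the label count, \cref{thm:main} at fixed $\alpha$ followed by $\alpha\to\infty$, and padding of a fixed-ratio positive-weight decomposition for the final clause. The only cosmetic difference is that the paper splits a weight known to be positive, whereas your equal split of $\lambda_m$ produces zero parts if $\lambda_m=0$. This is harmless, since vanishing weights are admissible and positivity is claimed only when all original weights are positive.
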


\begin{corollary}\label{cor:GMS}
With \(d_n=\binom{n+1}{2}\),
\[
 \cD(d_n,n)^{1/n}\longrightarrow e^{-1}<\frac12,
 \qquad
 \bigl(n!\,\DR(d_n,n,n)\bigr)^{1/n}
 \longrightarrow e^{-1/2}<\frac1{\sqrt2}.
\]
For all sufficiently large \(n\),
\[
 \DR\!\left(\binom{n+1}{2},n,n\right)
 <\frac{\sqrt{n+1}}{n!\,2^{n/2}}.
\]
\end{corollary}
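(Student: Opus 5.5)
We deduce \cref{cor:GMS} from \cref{cor:diverging-scale} with \(r_n=d_n=\binom{n+1}{2}\), followed by a strict comparison of exponential rates. First we check the hypotheses. We have \(d_n\ge n\) for every \(n\ge1\), and \(d_n/n=(n+1)/2\to\infty\). \Cref{cor:diverging-scale} therefore applies and gives
\[
 \cD(d_n,n)^{1/n}\to e^{-1},
 \qquad
 \bigl(n!\,\DR(d_n,n,n)\bigr)^{1/n}\to e^{-1/2}.
\]
The strict inequalities \(e^{-1}<\tfrac12\) and \(e^{-1/2}<1/\sqrt2\) both follow from \(e>2\).

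For the final inequality we compare with \(\tau_n=(n+1)/2^n\). Since \((n+1)^{1/n}\to1\), we have \(\tau_n^{1/n}\to\tfrac12\). Fix any \(b\) with \(e^{-1}<b<\tfrac12\), for instance \(b=0.4\). Convergence of \(\cD(d_n,n)^{1/n}\) to \(e^{-1}\) gives \(\cD(d_n,n)<b^n\) for all large \(n\). Since \(\tau_n\ge 2^{-n}\), we get \(b^n<2^{-n}\le\tau_n\) for every \(n\), and hence \(\cD(d_n,n)<\tau_n\) eventually. We could instead use the explicit statement in \cref{cor:diverging-scale}: for large \(n\) there is a positive-weight \(d_n\)-label decomposition with \(\cM(u)<b^n\), so \(\cD(d_n,n)\le\cM(u)<b^n\).

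Finally we apply \cref{eq:DR-D-relation}. The map \(t\mapsto\sqrt t/n!\) is strictly increasing, so
\[
 \DR(d_n,n,n)=\frac{\sqrt{\cD(d_n,n)}}{n!}<\frac{\sqrt{\tau_n}}{n!}=\frac{\sqrt{n+1}}{n!\,2^{n/2}}.
\]
This is the claimed strict failure of \eqref{eq:GMS-conjecture}. There is no real obstacle here, since everything rests on \cref{cor:diverging-scale}. The one point to check is that the limit \(e^{-1}\) lies strictly below \(\lim\tau_n^{1/n}=\tfrac12\). This gap is what lets the subexponential factor \(n+1\) in \(\tau_n\) be absorbed.
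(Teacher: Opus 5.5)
Your proposal is correct and matches the paper's proof: you apply \cref{cor:diverging-scale} with \(r_n=d_n\), compare \(\cD(d_n,n)^{1/n}\to e^{-1}\) with \(\tau_n^{1/n}\to\tfrac12\) to get \(\cD(d_n,n)<\tau_n\) eventually, and then pass to simplex volumes through \cref{eq:DR-D-relation}. Your observation that \(b^n<2^{-n}\le\tau_n\) for every \(n\) just spells out the paper's rate comparison more explicitly.
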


\Cref{cor:GMS} disproves González Merino--Schymura Conjecture~4.8, a proposed
sufficient condition for Makai's reverse isodiametric conjecture
\citep{Makai1978,GonzalezMerinoSchymura2020}.
The failure of this sufficient condition does not by itself settle
Makai's conjecture.  A separate route is supplied by the strong-slicing
result of \citet[Theorem~1.1]{OpenAI2026StrongSlicing}: taking that result
as an external input, \cref{app:makai} derives Makai's inequality and its
simplex equality case.  This deduction does not use the false
simplex-selection assertion \eqref{eq:GMS-conjecture}.

We note that the diverging-redundancy limit may also be derived from the
construction of \citet[Theorem~5.1]{PelczynskiSzarek1991}.

The matching upper bound has two ingredients.  A uniform lower bound on
the row weights controls the effect of normalisation (\cref{sec:model}).
For a fixed selection, the determinant moments have an exponential rate
linear in the moment order (\cref{sec:moments}).  For any threshold above
\(f_\alpha\), choosing this order sufficiently large, but independently
of \(n\), makes a union bound over all selections decay exponentially.
Independence between selections is not needed.  \Cref{sec:maximum}
combines these summable estimates with the first Borel--Cantelli lemma to
prove \cref{thm:main}, and
\cref{sec:consequence} passes to simplex volumes and the conjectured label
count.

\section{The universal lower bound}\label{sec:universal}

We recall the lower bound from the introduction and give its short
Cauchy--Binet proof for all \(m\ge n\).
For \(B\subseteq[m]\), write
\(\lambda_B=\prod_{i\in B}\lambda_i\).

\begin{proposition}
\label{prop:universal-floor}
Every unit decomposition as in \cref{eq:identity-decomposition} satisfies
\begin{equation}\label{eq:universal-floor}
 \cM(u)\ge L_{m,n}.
\end{equation}
Consequently, if \(m_n/n\to\alpha>1\), then
\[
 L_{m_n,n}^{1/n}\longrightarrow f_\alpha.
\]
\end{proposition}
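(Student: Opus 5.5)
The inequality \eqref{eq:universal-floor} follows from a Cauchy--Binet identity combined with an AM--GM bound on the weights. Let \(U\) be the \(n\times m\) matrix with columns \(\sqrt{\lambda_i}\,u_i\). Then \(UU^\top=\sum_i\lambda_iu_iu_i^\top=I_n\), and Cauchy--Binet gives
\[
 1=\det(UU^\top)=\sum_{|B|=n}\det(U_B)^2=\sum_{|B|=n}\lambda_B\,c_B ,
\]
since \(\det(U_B)^2=\lambda_B\det\Gram(u_i:i\in B)\). It follows that \(1\le\cM(u)\sum_{|B|=n}\lambda_B=\cM(u)\,e_n(\lambda)\), where \(e_n\) is the \(n\)-th elementary symmetric polynomial. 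Taking the trace of \eqref{eq:identity-decomposition} and using \(|u_i|=1\) gives \(\sum_i\lambda_i=n\). Maclaurin's inequality then yields
\[
 \frac{e_n(\lambda)}{\binom mn}\le\Bigl(\frac{e_1(\lambda)}{m}\Bigr)^{n}=\frac{n^n}{m^n},
\]
so \(e_n(\lambda)\le\binom mn n^n/m^n\), and hence \(\cM(u)\ge L_{m,n}\). This is valid for all \(m\ge n\), and zero weights cause no trouble. The only nontrivial input is Maclaurin's inequality. If one wants to avoid it, one can instead note that \(e_n\) is Schur-concave on the simplex \(\{\lambda\ge0,\ \sum\lambda_i=n\}\), so it is maximised at the uniform point \(\lambda_i=n/m\).

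For the asymptotic statement I would take logarithms and apply Stirling's formula in the form \(\log k!=k\log k-k+O(\log k)\), with \(0\log0=0\). This gives
\[
 \tfrac1n\log\binom{m_n}{n}=\tfrac1n\bigl[m_n\log m_n-n\log n-(m_n-n)\log(m_n-n)\bigr]+o(1).
\]
Write \(m_n=\alpha_n n\) with \(\alpha_n\to\alpha>1\). The bracket divided by \(n\) equals \(\alpha_n\log\alpha_n-(\alpha_n-1)\log(\alpha_n-1)\), because the \(\log n\) terms cancel: their total coefficient is \(\alpha_n-1-(\alpha_n-1)=0\). Also \(\tfrac1n\log(m_n^n/n^n)=\log\alpha_n\). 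Combining these,
\[
 \tfrac1n\log L_{m_n,n}\to\log\alpha-\alpha\log\alpha+(\alpha-1)\log(\alpha-1)=(\alpha-1)\log\frac{\alpha-1}{\alpha}=\log f_\alpha .
\]
The main step needing care is that the Stirling errors are \(o(n)\). This holds because \(m_n-n\sim(\alpha-1)n\to\infty\) when \(\alpha>1\), so every factorial argument grows linearly and each error term is \(O(\log n)\). Continuity of \(x\mapsto x\log x\) then handles the replacement of \(\alpha_n\) by \(\alpha\).
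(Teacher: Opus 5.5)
Your proof is correct and follows the paper's argument essentially step for step. Both use Cauchy--Binet for the matrix with columns $\sqrt{\lambda_i}u_i$, the trace identity $\sum_i\lambda_i=n$, Maclaurin's inequality for $e_n(\lambda)$, and Stirling's formula to get $\frac1n\log\binom{m_n}{n}\to\alpha\log\alpha-(\alpha-1)\log(\alpha-1)$; your explicit handling of $\alpha_n=m_n/n\to\alpha$ fills in the detail the paper leaves implicit.
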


\begin{proof}
Let \(X\) be the \(n\times m\) matrix whose \(i\)-th column is
\(\sqrt{\lambda_i}u_i\).  Then \(XX^{\top}=I_n\).  Cauchy--Binet gives
\[
 1=\det(XX^{\top})
  =\sum_{|B|=n}\lambda_Bc_B
  \le \cM(u)e_n(\lambda_1,\ldots,\lambda_m).
\]
Here \(e_n\) is the elementary symmetric polynomial of degree \(n\).
Taking traces in \cref{eq:identity-decomposition} gives
$\sum_i\lambda_i=n$.  Maclaurin's inequality therefore yields
\[
 e_n(\lambda_1,\ldots,\lambda_m)
 \le\binom mn\left(\frac nm\right)^n,
\]
which proves \cref{eq:universal-floor}.

\Needspace{6\baselineskip}
Finally, with
\(h_\alpha:=\alpha\log\alpha-(\alpha-1)\log(\alpha-1)\), Stirling's
formula gives
\[
 \frac1n\log\binom{m_n}{n}\longrightarrow h_\alpha.
\]
Moreover, \(\alpha e^{-h_\alpha}=f_\alpha\) by definition.  This proves
the second assertion of \cref{prop:universal-floor}.
\end{proof}

\section{Constructing decompositions from Haar frames}\label{sec:model}

\subsection{Normalisation and the determinant identity}

For \(1\le n\le m\), write
\[
 \St_n(\mathbb R^m)
 =\{Q\in\mathbb R^{m\times n}:Q^{\top}Q=I_n\}.
\]
Let \(Q\in\St_n(\mathbb R^m)\) have rows \(x_i^{\top}\).  These rows
form a Parseval frame, and their squared lengths
\(\lambda_i=\lVert x_i\rVert^2\) are called row weights, or leverages.
We normalise them using the convention
\[
 u_i=
 \begin{cases}
  x_i/\lVert x_i\rVert,&x_i\ne0,\\
  e_1,&x_i=0,
 \end{cases}
\]
where \(e_1=(1,0,\ldots,0)\in\mathbb R^n\).  Since \(\lambda_i=0\) for a
zero row, the choice of \(u_i\) does not affect the decomposition.

\Needspace{6\baselineskip}
\begin{lemma}\label{lem:normalization}
With the notation above, every \(u_i\) is a unit vector and
\[
 \sum_{i=1}^m\lambda_i u_i u_i^{\top}=I_n,
 \qquad
 \sum_{i=1}^m\lambda_i=n.
\]
If every row weight is positive, then for every \(n\)-set \(B\),
\begin{equation}\label{eq:row-normalized-minor}
 c_B=\frac{\det(Q_B)^2}{\prod_{i\in B}\lambda_i},
\end{equation}
where \(Q_B\) is the square row submatrix indexed by \(B\).
\end{lemma}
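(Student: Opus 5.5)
The plan is to verify the three claims directly from the definitions, with \(Q^{\top}Q=I_n\) as the only input.

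\emph{Unit vectors.} If \(x_i\ne0\), then \(u_i=x_i/\lVert x_i\rVert\) has norm one by construction. If \(x_i=0\), then \(u_i=e_1\), which is also a unit vector.

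\emph{The decomposition.} I would show that \(\lambda_iu_iu_i^{\top}=x_ix_i^{\top}\) for every \(i\).
\begin{itemize}
\item For a nonzero row, \(\lambda_i u_iu_i^{\top}=\lVert x_i\rVert^2\,x_ix_i^{\top}/\lVert x_i\rVert^2=x_ix_i^{\top}\).
\item For a zero row, both sides vanish, since \(\lambda_i=0\) and \(x_i=0\).
\end{itemize}
Summing over \(i\) gives \(\sum_i\lambda_iu_iu_i^{\top}=\sum_i x_ix_i^{\top}=Q^{\top}Q=I_n\). Taking traces gives \(\sum_i\lambda_i=\operatorname{tr}I_n=n\). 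Equivalently, \(\sum_i\lambda_i=\lVert Q\rVert_F^2=n\).

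\emph{The minor identity.} Assume every row weight is positive and fix an \(n\)-set \(B\). Let \(U_B\) be the \(n\times n\) matrix whose rows are \(u_i^{\top}\) for \(i\in B\), and let \(\Lambda_B=\operatorname{diag}(\lambda_i^{1/2}:i\in B)\). Then \(Q_B=\Lambda_BU_B\), so
\[
\det Q_B=\Bigl(\prod_{i\in B}\lambda_i^{1/2}\Bigr)\det U_B .
\]
The Gram matrix is \(\Gram(u_i:i\in B)=U_BU_B^{\top}\), so \(c_B=\det(U_B)^2\). Squaring the previous display and dividing by \(\prod_{i\in B}\lambda_i>0\) yields \eqref{eq:row-normalized-minor}.

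No step here is a real obstacle. The only care needed is the zero-row convention, and the second claim shows that this convention does not affect the decomposition.
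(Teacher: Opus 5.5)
Your proposal is correct and follows essentially the same route as the paper. The paper sums the identity \(\lambda_iu_iu_i^{\top}=x_ix_i^{\top}\) against \(Q^{\top}Q=I_n\), takes traces for the weight sum, and writes \(\Gram(u_i:i\in B)=D_B^{-1/2}Q_BQ_B^{\top}D_B^{-1/2}\); your factorisation \(Q_B=\Lambda_BU_B\) with \(\Lambda_B=D_B^{1/2}\) is the same computation in different notation.
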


\begin{proof}
By construction, \(\lVert u_i\rVert=1\) and
\(\lambda_i u_i u_i^{\top}=x_i x_i^{\top}\) for every \(i\).
Summing and using \(Q^{\top}Q=I_n\) gives the first identity; taking
traces gives the second.  When all row weights are positive, row
normalisation gives
\[
 \Gram(u_i:i\in B)
 =D_B^{-1/2}Q_BQ_B^{\top}D_B^{-1/2},
 \qquad
 D_B=\operatorname{diag}(\lambda_i:i\in B),
\]
which proves \cref{eq:row-normalized-minor}.
\end{proof}

\Needspace{14\baselineskip}
\subsection{A lower bound for all row weights}

The Haar frame \(Q_n\) from the introduction is uniformly distributed on
\(\St_n(\mathbb R^{m_n})\)
\citep[Sections~1.4.3 and~2.5.1]{Chikuse2003}.  Its row weights
\(\lambda_{n,i}\) have the same distribution and, by
\cref{lem:normalization}, sum to \(n\).
Thus each row weight has mean \(n/m_n\).  To obtain the sharp exponential
upper bound for \(M_n\) using \cref{eq:row-normalized-minor}, we need a
lower bound close to \(n/m_n\) that holds for every row.

\begin{lemma}\label{lem:leverage}
Fix \(\alpha>1\), put \(m_n=\lfloor\alpha n\rfloor\), and let
\(Q_n\) be Haar distributed on \(\St_n(\mathbb R^{m_n})\).  For every
\(0<\varepsilon<1\), there are constants \(c>0\) and \(N\), depending only
on \(\alpha\) and \(\varepsilon\), such that
\[
 \bbP\!\left\{
 \min_{1\le i\le m_n}\lambda_{n,i}
 <(1-\varepsilon)\frac n{m_n}
 \right\}
 \le 4m_n e^{-cn}
\]
for \(n\ge N\).  Moreover, every leverage is positive almost surely.
\end{lemma}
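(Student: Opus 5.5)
By exchangeability of the rows, a union bound reduces the lemma to a single row. It then suffices to show
\[
 \bbP\{\lambda_{n,1}<(1-\varepsilon)n/m_n\}\le 4e^{-cn}.
\]
To analyse one row, realise \(Q_n\) from a Gaussian matrix. Let \(G\) be an \(m_n\times n\) matrix with i.i.d.\ standard normal entries, and set \(Q_n=G(G^{\top}G)^{-1/2}\). By orthogonal invariance this \(Q_n\) is Haar distributed on \(\St_n(\mathbb R^{m_n})\) (the same model cited from \citet{Chikuse2003}). Writing \(g_1^{\top}\) for the first row of \(G\), the first leverage is
\[
 \lambda_{n,1}=g_1^{\top}(G^{\top}G)^{-1}g_1
 \ge \frac{\lVert g_1\rVert^2}{s_{\max}(G)^2},
\]
where \(s_{\max}(G)^2=\lVert G^{\top}G\rVert_{\mathrm{op}}\).

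A naive bound by the top singular value is not enough. We have \(s_{\max}(G)^2\approx(\sqrt{m_n}+\sqrt n)^2\), and this loses a constant factor relative to the target \(n/m_n\). So the key step is to avoid the crude operator-norm bound and to use the exact distribution of one leverage instead. I would use the classical fact that each row weight of a Haar frame is \(\mathrm{Beta}\bigl(n/2,(m_n-n)/2\bigr)\). One clean way to see this: \(x_1^{\top}\) is the first row of \(Q_n\), so \(\lambda_{n,1}=\lVert P e_1\rVert^2\), where \(P\) is the projection onto a uniformly random \(n\)-dimensional subspace of \(\mathbb R^{m_n}\). By invariance this has the same law as \(\lVert P_0 z\rVert^2\) for a uniform unit vector \(z\) and a fixed coordinate projection \(P_0\). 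That quantity equals
\[
 \frac{\chi^2_n}{\chi^2_n+\chi^{\prime2}_{m_n-n}}
\]
with independent chi-square variables.

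With this representation the event \(\lambda_{n,1}<(1-\varepsilon)n/m_n\) forces one of two things. Either \(\chi^2_n<(1-\varepsilon/3)n\), or \(\chi^2_n+\chi'^2_{m_n-n}>(1+\varepsilon/3)m_n\). This holds because \((1-\varepsilon/3)/(1+\varepsilon/3)\ge 1-\varepsilon\) for small \(\varepsilon\); for general \(\varepsilon<1\), I would pick \(\delta=\delta(\varepsilon)\) with \((1-\delta)/(1+\delta)\ge1-\varepsilon\). Standard chi-square (Laurent--Massart or Chernoff) tail bounds give
\[
 \bbP\{\chi^2_k<(1-\delta)k\}\le e^{-k\delta^2/4},
 \qquad
 \bbP\{\chi^2_k>(1+\delta)k\}\le e^{-ck\delta^2}
 \quad\text{for }\delta\le1.
\]
Apply these with \(k=n\) and \(k=m_n\); note that \(\chi^2_n+\chi'^2_{m_n-n}\) is itself \(\chi^2_{m_n}\), namely \(\lVert z'\rVert^2\) before normalisation. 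This gives a bound of the form \(2e^{-cn}\) per row, with \(c\) depending only on \(\alpha,\varepsilon\) since \(m_n\ge n\). The threshold \(N\) only absorbs the floor in \(m_n\). The union bound over \(m_n\) rows then yields \(\le 2m_ne^{-cn}\le4m_ne^{-cn}\).

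Positivity almost surely also follows from the representation: \(\chi^2_n>0\) almost surely, so each leverage is positive almost surely, and there are finitely many rows. The main obstacle is justifying the Beta/chi-square representation rigorously from Haar invariance. I would do this via the rotation-invariance argument above, which swaps the roles of the random subspace and the fixed basis vector.
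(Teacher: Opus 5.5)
Your proof is correct and is essentially the paper's argument. Like the paper, you use the exact law $\lambda_{n,i}\stackrel{d}{=}Z/(Z+W)$ with independent $Z\sim\chi^2_n$ and $W\sim\chi^2_{m_n-n}$, split the bad event into two chi-square tail events, and take a union bound over the $m_n$ rows; the Gaussian polar-factor detour you discard is not needed. The differences are minor: you bound the denominator through $Z+W\sim\chi^2_{m_n}$ instead of $W\sim\chi^2_{m_n-n}$, which gives the slightly stronger $2m_ne^{-cn}$ with $c$ depending only on $\varepsilon$, and your condition $(1-\delta)/(1+\delta)\ge1-\varepsilon$ already holds with $\delta=\varepsilon/3$ for every $\varepsilon\in(0,1)$, not only for small $\varepsilon$.
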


\begin{proof}
If \(m_n=n\), orthogonality gives \(\lambda_{n,i}=1\) for every \(i\), and
the estimate follows.  Increase \(N\), if necessary, so that
\(m_n>n\) for all \(n\ge N\).
Realise \(Q_n\) as the first \(n\) columns of a Haar orthogonal matrix.
A fixed row of that matrix is uniform on \(\mathbb S^{m_n-1}\), and
\(\lambda_{n,i}\) is the
sum of the squares of its first \(n\) coordinates.  Therefore
\[
 \lambda_{n,i}\ \stackrel{d}=\ \frac Z{Z+W},
 \qquad
 Z\sim\chi_n^2,
 \quad
 W\sim\chi_{m_n-n}^2,
\]
with \(Z\) and \(W\) independent.  Equivalently,
\(\lambda_{n,i}\sim\operatorname{Beta}(n/2,(m_n-n)/2)\).

Set \(\eta=\varepsilon/2\).  Then
\[
 \frac{n(1-\eta)}{n(1-\eta)+(m_n-n)(1+\eta)}
 \ge\frac n{m_n}\frac{1-\eta}{1+\eta}
 \ge(1-\varepsilon)\frac n{m_n}.
\]
Consequently,
\[
 \left\{\frac Z{Z+W}<(1-\varepsilon)\frac n{m_n}\right\}
 \subseteq
 \{Z<n(1-\eta)\}\cup\{W>(m_n-n)(1+\eta)\}.
\]
Since \(m_n/n\to\alpha>1\), the standard chi-square tail inequalities
bound the probability of the right side by \(4e^{-cn}\) for all
sufficiently large \(n\), with \(c>0\) depending only on \(\alpha\) and
\(\varepsilon\); see, for example,
\citet[Lemma~1, equations~(4.3)--(4.4)]{LaurentMassart2000}.  A union bound
over the \(m_n\) rows, using their common one-row marginal law, proves the
estimate in \cref{lem:leverage}.
The beta law is supported on \((0,1)\); a finite union of null events gives
simultaneous positivity of all leverages.
\end{proof}

\section{Determinant moments for a fixed selection}\label{sec:moments}

For the numerator in \cref{eq:row-normalized-minor}, we compute the
moments using the real matrix-beta integral in the normalisation of
\citet[Theorem~3.3.1 and Definition~3.3.2]{Muirhead1982}.

\begin{lemma}\label{lem:block-moment}
Let \(m\ge n\), let \(Q\) be Haar distributed on
\(\St_n(\mathbb R^m)\), and fix \(B\subseteq[m]\) with \(|B|=n\).  For
every positive integer \(s\),
\begin{equation}\label{eq:block-moment-factorial}
\begin{aligned}
 \bbE\det(Q_B)^{2s}
 &=\prod_{j=1}^n
   \frac{((n-j+1)/2)_s}{((m-j+1)/2)_s}
 \\
 &=\prod_{\ell=0}^{s-1}
   \frac{(n+2\ell)!\,(m-n+2\ell)!}
        {(2\ell)!\,(m+2\ell)!}.
\end{aligned}
\end{equation}
Here \((a)_s=a(a+1)\cdots(a+s-1)\).
\end{lemma}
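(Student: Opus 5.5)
By left-invariance of Haar measure under permutations, we may take \(B=\{1,\ldots,n\}\), so that \(Q_B\) is the top \(n\times n\) block of \(Q\). The plan is to reduce \(\det(Q_B)^2\) to the determinant of a real matrix-beta variable and then read off the moments from the matrix-beta normalising constant in \citet[Theorem~3.3.1]{Muirhead1982}.

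\textbf{Reduction to a matrix-beta variable.} Realise \(Q=G(G^{\top}G)^{-1/2}\), where \(G\) is an \(m\times n\) matrix with i.i.d.\ standard Gaussian entries; this is uniform on \(\St_n(\mathbb R^m)\) \citep{Chikuse2003}. Split \(G\) into its top block \(G_1\) (\(n\times n\)) and bottom block \(G_2\) (\((m-n)\times n\)), and set \(A=G_1^{\top}G_1\sim W_n(n,I)\) and \(C=G_2^{\top}G_2\sim W_n(m-n,I)\), which are independent. Then \(Q_B=G_1(A+C)^{-1/2}\), hence
\[
 \det(Q_B)^2=\frac{\det A}{\det(A+C)}=\det U,
 \qquad U=(A+C)^{-1/2}A(A+C)^{-1/2}.
\]
By Muirhead's Theorem~3.3.1, \(U\sim\mathrm{Beta}_n(n/2,(m-n)/2)\). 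When \(m-n<n\) the matrix \(C\) is singular and the density does not exist, but the moments below are polynomial identities in \(m\) and extend by the standard argument. More simply, one can avoid densities altogether: use the Bartlett decomposition, or condition sequentially on columns. The \(j\)-th squared diagonal entry of the QR factor contributes an independent \(\mathrm{Beta}((n-j+1)/2,(m-n)/2)\) factor. This gives the product form directly and covers every \(m\ge n\), so it is the route I would actually use.

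\textbf{Computing the moments.} The \(s\)-th moment of \(\det U\) is a ratio of multivariate gamma functions:
\[
 \bbE(\det U)^s=\frac{\Gamma_n(n/2+s)\,\Gamma_n(m/2)}{\Gamma_n(n/2)\,\Gamma_n(m/2+s)}.
\]
Since \(\Gamma_n(a)=\pi^{n(n-1)/4}\prod_{j=1}^n\Gamma(a-(j-1)/2)\), this equals \(\prod_j((n-j+1)/2)_s/((m-j+1)/2)_s\), which is the first line of the claim. Equivalently, it is the product of the scalar moments of the independent factors \(\mathrm{Beta}((n-j+1)/2,(m-n)/2)\).

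\textbf{Rewriting as factorials.} For the second line, fix \(\ell\) and collect the factors \(\prod_{j=1}^n((n-j+1)/2+\ell)\) over \(j\). Writing \((k+2\ell)/2\) for \(k=1,\ldots,n\) gives \(2^{-n}(n+2\ell)!/(2\ell)!\). Similarly, the denominator gives \(2^{-n}(m+2\ell)!/(m-n+2\ell)!\). The powers of \(2\) cancel, and taking the product over \(\ell=0,\ldots,s-1\) yields the second line. The main obstacle is justifying the matrix-beta law, or equivalently the independence of the beta factors, uniformly in the singular regime \(m<2n\). I would handle this with the sequential column (Bartlett-type) argument rather than with densities.
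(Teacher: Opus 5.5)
Your proposal is correct. For \(m\ge2n\) it matches the paper: Gaussian polar factor, \(\det(Q_B)^2=\det W_1/\det(W_1+W_2)\), matrix-beta law with parameters \(n/2\) and \((m-n)/2\), a ratio of multivariate beta functions, then regrouping the Pochhammer factors. The genuine difference is the singular regime \(n<m<2n\).

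The paper keeps the density argument and transports it by duality. It realises \(Q\) as the first \(n\) columns of a Haar \(O\in O(m)\), and Jacobi's complementary-minor identity gives \(|\det Q_B|=|\det O_{B^c,J^c}|\), a block of the remaining \(r=m-n\) columns. Since \(m\ge2r\), the density case applies, and the factorial formula is symmetric under \(n\leftrightarrow m-n\). Your sequential-conditioning route needs no matrix-variate densities and no case split. It also proves more: for every \(m\ge n\), \(\det(Q_B)^2\) is a product of independent \(\mathrm{Beta}((n-j+1)/2,(m-n)/2)\) variables, so the first line is just a product of scalar beta moments. The paper's route is shorter given the citations and explains the \(n\leftrightarrow m-n\) symmetry; yours is more elementary and uniform.

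As written, however, you only assert the key step. It follows from three observations:
\begin{itemize}
\item Conditionally on the first \(j-1\) columns \(q_1,\ldots,q_{j-1}\) of \(Q\), the column \(q_j\) is uniform on the unit sphere of \(W_j=\operatorname{span}(q_1,\ldots,q_{j-1})^{\perp}\), which has dimension \(m-j+1\).
\item The \(j\)-th squared diagonal entry of the triangular factor of a QR decomposition of \(Q_B\) equals \(\lVert\Pi_{V_j}q_j\rVert^2\). Here \(\Pi_{V_j}\) is the orthogonal projection onto \(V_j\), the space of vectors supported on \(B\) and orthogonal to \(q_1,\ldots,q_{j-1}\).
\item \(V_j\subseteq W_j\), and \(\dim V_j=n-j+1\) almost surely.
\end{itemize}
Hence the conditional law is \(\mathrm{Beta}((n-j+1)/2,(m-n)/2)\) whatever the past, which gives both the marginal laws and the independence.

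If you prefer the Bartlett version instead, the clean closing step is that the polar factor \(Q\) is independent of \(G^{\top}G\), by left \(O(m)\)-invariance. Then \(\bbE\det(G_1^{\top}G_1)^s=\bbE\det(Q_B)^{2s}\,\bbE\det(G^{\top}G)^s\), and both Wishart determinant moments are products of chi-square moments.

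Drop the remark that the moments are polynomial identities in \(m\) that extend by a standard argument. It is not a proof as stated, and it is unnecessary.
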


\begin{proof}
If \(m=n\), then \(Q\) is orthogonal, the only \(n\)-row block is \(Q\)
itself, and all three expressions in the statement equal \(1\).  We first
suppose \(m\ge2n\).
Let \(G\in\mathbb R^{m\times n}\) have independent standard Gaussian
entries.  Its polar factor
\[
 Q=G(G^{\top}G)^{-1/2}
\]
is Haar distributed on the Stiefel manifold.  By row-exchange symmetry we
may take \(B=[n]\).  Partition \(G\) into its first \(n\) rows and the
remaining \(m-n\) rows, and put
\[
 W_1=G_B^{\top}G_B,
 \qquad
 W_2=G_{B^c}^{\top}G_{B^c}.
\]
These are independent real Wishart matrices with \(n\) and \(m-n\) degrees
of freedom, respectively.  Since \(m\ge2n\), both are positive definite
almost surely.  Set
\[
 S=W_1+W_2,
 \qquad
 U=S^{-1/2}W_1S^{-1/2}.
\]
Then \(0<U<I_n\), and
\[
 \det(Q_B)^2=\det(Q_B^{\top}Q_B)
 =\det U
 =\frac{\det W_1}{\det(W_1+W_2)}.
\]

For
\[
 \Gamma_n(a)=\pi^{n(n-1)/4}
              \prod_{j=1}^n\Gamma\!\left(a-\frac{j-1}{2}\right),
 \qquad
 B_n(a,b)=\frac{\Gamma_n(a)\Gamma_n(b)}{\Gamma_n(a+b)},
\]
the beta-I matrix with parameters \(a,b>(n-1)/2\) has density
\[
 \frac{1}{B_n(a,b)}
 (\det U)^{a-(n+1)/2}
 (\det(I_n-U))^{b-(n+1)/2},
 \qquad 0<U<I_n,
\]
with respect to Lebesgue measure on the real symmetric matrices.  The
Wishart-to-beta transformation gives \(U\) this law with
\(a=n/2\) and \(b=(m-n)/2\); the required parameter inequalities follow
from \(m\ge2n\).  Multiplying this density by
\((\det U)^s\) and integrating therefore gives
\[
 \bbE\det(Q_B)^{2s}
 =\frac{B_n(n/2+s,(m-n)/2)}{B_n(n/2,(m-n)/2)}.
\]
Expanding the multivariate gamma functions gives
the first product formula in \cref{eq:block-moment-factorial}.

To obtain the second product formula in \cref{eq:block-moment-factorial},
set \(k=n-j+1\) and interchange the two finite products:
\[
 \prod_{k=1}^n\prod_{\ell=0}^{s-1}
 \frac{k+2\ell}{m-n+k+2\ell}
 =\prod_{\ell=0}^{s-1}
 \frac{(n+2\ell)!}{(2\ell)!}
 \frac{(m-n+2\ell)!}{(m+2\ell)!}.
\]
This proves both formulas when \(m\ge2n\).

It remains to consider \(n<m<2n\).  Put \(r=m-n\), let \(O\) be Haar
distributed on \(O(m)\), let \(J=[n]\), and realise
\(Q=O_{[m],J}\).  Thus \(Q\) has the required Haar distribution, while
\(R=O_{[m],J^c}\) is Haar distributed on \(\St_r(\mathbb R^m)\).
Jacobi's complementary-minor identity states in this setting that
\[
 \det (O^{-1})_{J,B}
 =(-1)^{\sum_{j\in J}j+\sum_{i\in B}i}
   \frac{\det O_{B^c,J^c}}{\det O}.
\]
Since \((O^{-1})_{J,B}=(O_{B,J})^{\top}\) and
\(|\det O|=1\), it follows that
\[
 |\det Q_B|=|\det R_{B^c}|.
\]
The inequality \(m<2n\) gives \(m\ge2r\), so the matrix-beta case just
proved, applied to the \(r\times r\) block \(R_{B^c}\), yields
\begin{align*}
 \bbE\det(Q_B)^{2s}
 &=\bbE\det(R_{B^c})^{2s}\\
 &=\prod_{\ell=0}^{s-1}
   \frac{(r+2\ell)!\,(m-r+2\ell)!}
        {(2\ell)!\,(m+2\ell)!}\\
 &=\prod_{\ell=0}^{s-1}
   \frac{(n+2\ell)!\,(m-n+2\ell)!}
        {(2\ell)!\,(m+2\ell)!}.
\end{align*}
This is the second product formula in \cref{eq:block-moment-factorial}; the
same finite-product identity used above gives the first.
\end{proof}

Note that at \(s=1\), \cref{eq:block-moment-factorial} reduces to
\[
 \bbE\det(Q_B)^2=\frac{n!(m-n)!}{m!}=\binom mn^{-1}.
\]
This is also forced by row-exchange symmetry and Cauchy--Binet, since
\(\sum_{|B|=n}\det(Q_B)^2=1\).

For fixed \(s\), applying Stirling's formula to
\cref{eq:block-moment-factorial} gives the following estimate.

\begin{lemma}\label{lem:entropy}
Fix \(\alpha>1\), put \(m_n=\lfloor\alpha n\rfloor\), and let \(Q_n\)
be Haar distributed on \(\St_n(\mathbb R^{m_n})\).  For each \(n\), fix
\(B_n\subseteq[m_n]\) with \(|B_n|=n\), and fix a positive integer \(s\).
Then
\[
 \frac1n\log\bbE\det(Q_{n,B_n})^{2s}=-s h_\alpha+o(1).
\]
\end{lemma}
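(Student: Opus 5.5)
We start from the second product formula in \cref{lem:block-moment}, with \(m=m_n\) and the given fixed \(s\):
\[
 \log\bbE\det(Q_{n,B_n})^{2s}
 =\sum_{\ell=0}^{s-1}\Bigl[\log(n+2\ell)!+\log(m_n-n+2\ell)!-\log(2\ell)!-\log(m_n+2\ell)!\Bigr].
\]
Since the sum has only \(s\) terms and \(s\) does not depend on \(n\), it is enough to prove that each summand equals \(-nh_\alpha+o(n)\). Summing then gives \(-snh_\alpha+o(n)\), and dividing by \(n\) gives the claim.

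Fix \(\ell\in\{0,\ldots,s-1\}\). The term \(\log(2\ell)!\) is a constant and so is \(o(n)\). For the other terms we use the bound \(\log(k+j)!-\log k!\le j\log(k+j)\), which shows that shifting \(k\) by a bounded amount \(j=2\ell\) changes \(\log k!\) by only \(O(\log n)\) when \(k=O(n)\). Here each of \(n\), \(m_n-n\) and \(m_n\) is at most \(m_n\le\alpha n\). The \(\ell\)-th summand therefore equals
\[
 \log n!+\log(m_n-n)!-\log m_n!+O(\log n)
 =-\log\binom{m_n}{n}+O(\log n).
\]
The same cancellation can be seen from Stirling's formula \(\log k!=k\log k-k+O(\log(k+1))\). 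Applying it to all three factorials, the linear terms cancel because \(n+(m_n-n)=m_n\).

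It remains to use the fact recorded in the proof of \cref{prop:universal-floor}: \(\frac1n\log\binom{m_n}{n}\to h_\alpha\) whenever \(m_n/n\to\alpha>1\). This gives \(\frac1n\times(\ell\text{-th summand})\to-h_\alpha\) for each fixed \(\ell\), and summing over the \(s\) values of \(\ell\) proves the lemma. The proof uses \(\alpha>1\) only through this binomial asymptotic and to ensure \(m_n-n\to\infty\). Even that second use is not essential, since a bounded \(m_n-n\) would only contribute \(O(\log n)\).

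The argument is routine, so there is no real obstacle. The one point that needs care is uniformity: the \(O(\log n)\) error from the \(2\ell\)-shifts must be uniform over \(\ell\le s-1\). This holds because \(s\) is fixed, so there are finitely many shifts, each bounded by \(2s\).
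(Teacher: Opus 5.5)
Your proof is correct and follows essentially the same route as the paper: both start from the second product formula of \cref{lem:block-moment}, handle each of the finitely many factors $\ell=0,\ldots,s-1$ separately via Stirling-type asymptotics, and note that $(2\ell)!$ contributes only $o(n)$. The one difference is minor. You absorb the bounded shifts $2\ell$ into an $O(\log n)$ error and then quote the binomial asymptotic $\frac1n\log\binom{m_n}{n}\to h_\alpha$ from the proof of \cref{prop:universal-floor}, whereas the paper applies $\log\Gamma(cn+O(1))=cn(\log n+\log c-1)+O(\log n)$ directly to the shifted factorials.
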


\begin{proof}
\Cref{lem:block-moment} applies because
\(m_n\ge n\).  Moreover,
\[
 m_n=\alpha n+O(1),
 \qquad
 m_n-n=(\alpha-1)n+O(1),
\]
and both linear coefficients are positive.
\Needspace{5\baselineskip}
For each fixed \(\ell\), apply
\[
 \log\Gamma(cn+O(1))
 =cn(\log n+\log c-1)+O(\log n)
\]
to the three factorials in each quotient whose arguments grow linearly with
\(n\).  Since \(\ell\) is fixed, the factor \((2\ell)!\) contributes
\(o(n)\) to the logarithm.  Thus, for every fixed \(\ell\), division by
\(n\) gives
\[
 \frac1n\log
 \frac{(n+2\ell)!\,(m_n-n+2\ell)!}
      {(2\ell)!\,(m_n+2\ell)!}
 =-h_\alpha+o(1).
\]
Summing over the fixed set \(\ell=0,\ldots,s-1\) proves
\cref{lem:entropy}.
\end{proof}

\section{Proof of the main theorem}\label{sec:maximum}

The lower bound in \cref{thm:main} follows from \cref{sec:universal}.
To obtain the matching upper bound, we must control all
\(\binom{m_n}{n}\) selections simultaneously. Outside the exceptional
event in \cref{lem:leverage}, all row weights are bounded below, so
\cref{eq:row-normalized-minor} allows us to apply the moment estimate of
\cref{lem:entropy} to the normalised determinants. For each
\(b>f_\alpha\), a sufficiently large fixed moment order makes
the probability bound for a single selection small enough that its sum
over all selections is summable in \(n\).

\begin{proposition}\label{prop:upper-tail}
Under the hypotheses of \cref{thm:main}, for every \(b>f_\alpha\), there
are constants \(C,\beta>0\) and \(N\), depending only on \(\alpha,b\), such
that
\[
 \bbP_n\{M_n\ge b^n\}\le Ce^{-\beta n}
 \qquad(n\ge N).
\]
In particular,
\[
 \sum_{n=1}^{\infty}\bbP_n\{M_n\ge b^n\}<\infty.
\]
\end{proposition}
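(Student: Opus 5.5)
We split on the exceptional event of \cref{lem:leverage} and apply a union bound over selections. Fix \(b>f_\alpha\), choose \(\varepsilon\in(0,1)\) (to be fixed below), and let
\[
 E_n=\Bigl\{\min_i\lambda_{n,i}\ge(1-\varepsilon)\tfrac n{m_n}\Bigr\}.
\]
By \cref{lem:leverage}, \(\bbP_n(E_n^c)\le 4m_ne^{-cn}\), which is at most \(C'e^{-c'n}\) because \(m_n\le\alpha n\). Leverages are positive almost surely, so \cref{eq:row-normalized-minor} applies. On \(E_n\), every \(n\)-set \(B\) therefore satisfies
\[
 c_B\le \det(Q_{n,B})^2\Bigl(\frac{m_n}{(1-\varepsilon)n}\Bigr)^{n}.
\]
It follows that \(\{M_n\ge b^n\}\cap E_n\) is contained in the union over \(B\) of the events \(\{\det(Q_{n,B})^2\ge t_n\}\), where \(t_n=b^n\bigl((1-\varepsilon)n/m_n\bigr)^n\).

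Next we control a single selection with Markov's inequality at a fixed moment order \(s\):
\[
 \bbP_n\{\det(Q_{n,B})^2\ge t_n\}\le t_n^{-s}\,\bbE\det(Q_{n,B})^{2s}.
\]
\Cref{lem:entropy} gives \(\frac1n\log\bbE\det(Q_{n,B})^{2s}=-sh_\alpha+o(1)\). This estimate is uniform in \(B\), because by row-exchange symmetry, or directly from \cref{lem:block-moment}, the moment does not depend on \(B\). There are \(\binom{m_n}{n}=e^{nh_\alpha+o(n)}\) selections, so summing gives the exponent
\[
 \frac1n\log\bigl(\text{union bound}\bigr)
 = h_\alpha - sh_\alpha - s\log b - s\log(1-\varepsilon) + s\log\alpha + o(1),
\]
where we used \(n/m_n\to1/\alpha\). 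Since \(\log f_\alpha=\log\alpha-h_\alpha\), the right-hand side equals
\[
 h_\alpha - s\bigl(\log b-\log f_\alpha+\log(1-\varepsilon)\bigr)+o(1).
\]

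Now fix the constants. Choose \(\varepsilon\) small enough that \(\delta:=\log b-\log f_\alpha+\log(1-\varepsilon)>0\); this is possible because \(b>f_\alpha\). Then choose the integer \(s\) with \(s\delta>h_\alpha+1\), independently of \(n\). For \(n\ge N\) the \(o(1)\) term is below \(1/2\), so the union bound is at most \(e^{-n/2}\). Adding the bound on \(\bbP_n(E_n^c)\) gives \(\bbP_n\{M_n\ge b^n\}\le Ce^{-\beta n}\) with \(\beta=\min(c',1/2)\). All constants depend only on \(\alpha\) and \(b\), since \(\varepsilon\) and \(s\) were chosen as functions of these. Summability over \(n\) follows from the geometric bound.

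The only delicate point is the bookkeeping in the exponent. The three Stirling terms \(h_\alpha\) (counting selections), \(-sh_\alpha\) (the moment), and \(s\log\alpha\) (the normalisation by leverages) must combine into exactly \(-s\log(b/f_\alpha)\) plus an \(s\)-independent term. The gain from each additional moment must also beat the fixed entropy \(h_\alpha\). One must also check that the \(o(1)\) error in \cref{lem:entropy} is allowed to depend on \(s\). This is harmless because \(s\) is fixed before \(n\to\infty\).
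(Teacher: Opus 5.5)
Your proposal is correct and follows the paper's proof essentially step for step: restrict to the event that every leverage is at least $(1-\varepsilon)n/m_n$, apply Markov's inequality with a fixed moment order $s$ to each selection, take a union bound over the $\binom{m_n}{n}$ selections, and use the Stirling rates to reduce the exponent to $h_\alpha - s\log\bigl(b(1-\varepsilon)/f_\alpha\bigr)+o(1)$ before choosing $s$ large. The only differences are cosmetic, namely how you choose $s$, $\beta$, and $C$.
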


\begin{proof}
Choose \(0<\varepsilon<1\) so small that
\(b(1-\varepsilon)>f_\alpha\), and let \(\mathcal L_{n,\varepsilon}\) be
the event that
\(\lambda_{n,i}\ge(1-\varepsilon)n/m_n\) for every \(i\).
By \cref{eq:row-normalized-minor}, on this event the inequality
\(c_B\ge b^n\) implies
\[
 \det(Q_{n,B})^2
 \ge\left(b(1-\varepsilon)\frac n{m_n}\right)^n.
\]
For every positive integer \(s\), the preceding implication and Markov's
inequality give
\[
\begin{aligned}
 \bbP\bigl(\{c_B\ge b^n\}\cap\mathcal L_{n,\varepsilon}\bigr)
 &\le \bbP\!\left\{
  \det(Q_{n,B})^2
  \ge [b(1-\varepsilon)n/m_n]^n
 \right\}\\
 &\le
 \frac{\bbE\det(Q_{n,B})^{2s}}
      {[b(1-\varepsilon)n/m_n]^{ns}}.
\end{aligned}
\]
By row-exchange symmetry, the right-hand side is the same for every
selection \(B\). A union bound over the \(\binom{m_n}{n}\) selections,
together with the probability that \(\mathcal L_{n,\varepsilon}\) fails,
therefore gives
\begin{equation}\label{eq:master-union-bound}
 \bbP\{M_n\ge b^n\}
 \le \bbP(\mathcal L_{n,\varepsilon}^{c})
 +\binom{m_n}{n}
  \frac{\bbE\det(Q_{n,B})^{2s}}
       {[b(1-\varepsilon)n/m_n]^{ns}}.
\end{equation}
Write \(T_n\) for the second term on the right of
\cref{eq:master-union-bound}.  By \cref{lem:entropy} and
\(m_n/n\to\alpha\),
\begin{equation}\label{eq:union-rate}
 \frac1n\log T_n
 =h_\alpha+s\log\frac{f_\alpha}{b(1-\varepsilon)}+o(1).
\end{equation}
Because \(b(1-\varepsilon)>f_\alpha\), one has
\[
 \log\frac{f_\alpha}{b(1-\varepsilon)}<0.
\]
Choose a fixed integer
\[
 s>\frac{h_\alpha}
          {\log\!\bigl(b(1-\varepsilon)/f_\alpha\bigr)}.
\]
Put
\[
 \delta=s\log\!\frac{b(1-\varepsilon)}{f_\alpha}-h_\alpha>0.
\]
By \cref{eq:union-rate}, \(T_n\le e^{-\delta n/2}\) for all sufficiently
large \(n\).  Since \(m_n\le\alpha n\), \cref{lem:leverage} and
\cref{eq:master-union-bound} give
\[
 \bbP_n\{M_n\ge b^n\}
 \le4\alpha n e^{-cn}+e^{-\delta n/2}
 \le Ce^{-\beta n}
\]
eventually, where
\[
 \beta=\tfrac12\min(c,\delta)>0,
 \qquad C=1+\frac{8\alpha}{ce}.
\]
The last inequality uses \(xe^{-cx/2}\le2/(ce)\) for \(x\ge0\).
The geometric tail is summable, and the finitely many earlier terms do
not affect convergence of the series.
\end{proof}

\begin{proof}[Proof of \cref{thm:main}]
By \cref{lem:normalization}, the row-normalised family is always an
admissible decomposition; by \cref{lem:leverage}, all its weights are
positive almost surely.  The deterministic lower bound in
\cref{prop:universal-floor} gives
\[
 M_n^{1/n}\ge L_{m_n,n}^{1/n}\longrightarrow f_\alpha.
\]
Let \(\bbP\) now denote the probability on any common-space realisation
of the Haar matrices.  For every fixed \(b>f_\alpha\), the marginal laws
and \cref{prop:upper-tail} give
\[
 \sum_{n=1}^{\infty}\bbP\{M_n\ge b^n\}
 =\sum_{n=1}^{\infty}\bbP_n\{M_n\ge b^n\}<\infty.
\]
The first Borel--Cantelli lemma implies that \(M_n^{1/n}<b\) eventually
almost surely.  Apply this to \(b_k=f_\alpha+1/k\), \(k\ge1\), and
intersect the resulting countably many probability-one events.  Then
\(\limsup_{n\to\infty}M_n^{1/n}\le f_\alpha\) almost surely.  The
deterministic lower bound above proves the random limit in
\cref{thm:main}.  No independence across dimensions is required.

The same lower bound gives
\[
 \liminf_{n\to\infty}\cD(m_n,n)^{1/n}\ge f_\alpha.
\]
Conversely, for every \(b>f_\alpha\), \cref{prop:upper-tail} and the
almost-sure positivity of the leverages imply that a positive-weight frame
with \(\cM(u)<b^n\) exists for all sufficiently large \(n\).
Therefore
\[
 \limsup_{n\to\infty}\cD(m_n,n)^{1/n}\le b.
\]
Letting \(b\downarrow f_\alpha\) proves
the deterministic limit in \cref{thm:main}.
\end{proof}

When \(\alpha>2\), even linearly many labels suffice to beat
\(\tau_n=(n+1)/2^n\).

\Needspace{6\baselineskip}
\begin{corollary}\label{cor:candidate-scale}
If \(\alpha>2\), then
\[
 \bbP_n\!\left\{M_n<\frac{n+1}{2^n}\right\}\longrightarrow1.
\]
Consequently, for all sufficiently large \(n\), there is a deterministic
positive-weight decomposition with \(\lfloor\alpha n\rfloor\) labels and
\(\cM(u)<(n+1)/2^n\).
\end{corollary}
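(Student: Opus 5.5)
The plan is to apply \cref{prop:upper-tail} with a single fixed threshold \(b\) chosen strictly between \(f_\alpha\) and \(1/2\). The key numerical fact is that \(f_\alpha<1/2\) whenever \(\alpha>2\). Write \(f_\alpha=(1-1/\alpha)^{\alpha-1}\) and \(g(\alpha)=(\alpha-1)\log(1-1/\alpha)\). At \(\alpha=2\) one has \(g(2)=\log(1/2)\), so \(f_2=1/2\). For \(\alpha>1\),
\[
 g'(\alpha)=\log\!\left(1-\frac1\alpha\right)+\frac1\alpha<0,
\]
since \(\log(1-x)<-x\) for \(0<x<1\). Thus \(g\) is strictly decreasing, and \(f_\alpha<1/2\) for every \(\alpha>2\). (Equivalently, \(f_\alpha\downarrow e^{-1}\) as \(\alpha\to\infty\).)

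Fix \(\alpha>2\) and choose \(b\) with \(f_\alpha<b<1/2\). By \cref{prop:upper-tail}, \(\bbP_n\{M_n\ge b^n\}\le Ce^{-\beta n}\to0\). Since \(b<1/2\), we have
\[
 b^n=\frac{(2b)^n}{2^n}<\frac{n+1}{2^n}
\]
for every \(n\ge1\), because \((2b)^n<1<n+1\). Hence \(\{M_n\ge (n+1)/2^n\}\subseteq\{M_n\ge b^n\}\), so its probability also tends to zero. This gives the first assertion.

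For the deterministic consequence, note that for all sufficiently large \(n\) the event \(\{M_n<(n+1)/2^n\}\) has positive probability. By \cref{lem:leverage}, all leverages are positive almost surely, so the intersection of the two events also has positive probability and is therefore nonempty. Pick any \(Q\) in it. By \cref{lem:normalization}, its normalised rows with weights \(\lambda_i>0\) form a positive-weight decomposition with \(m_n=\lfloor\alpha n\rfloor\) labels and \(\cM(u)=M_n<(n+1)/2^n\).

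The main obstacle is the strict inequality \(f_\alpha<1/2\), which the monotonicity computation above settles. Everything else is a direct use of \cref{prop:upper-tail} and \cref{lem:leverage}.
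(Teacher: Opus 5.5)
Your proposal is correct and follows the paper's proof essentially step for step. Monotonicity of \(\log f_\alpha\) gives \(f_\alpha<f_2=1/2\); a threshold \(f_\alpha<b<1/2\) is fed into \cref{prop:upper-tail}; and positive probability for large \(n\), combined with the almost-sure positivity of leverages from \cref{lem:leverage} (which you state explicitly), yields the deterministic decomposition. Your observation that \((2b)^n<1<n+1\) gives \(b^n<(n+1)/2^n\) for every \(n\), rather than only for large \(n\) via \(\tau_n^{1/n}\to1/2\) as in the paper, is a small harmless sharpening.
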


\Needspace{9\baselineskip}
\begin{proof}
Differentiation gives
\[
 \frac{d}{d\alpha}\log f_\alpha
 =\log\left(1-\frac1\alpha\right)+\frac1\alpha<0.
\]
Thus \(f_\alpha<f_2=1/2\) for \(\alpha>2\).  Choose
\(f_\alpha<b<1/2\).  Since
\(\tau_n^{1/n}=(n+1)^{1/n}/2\to1/2\), we have \(b^n<\tau_n\) for all
large \(n\).  The probability assertion follows from
\cref{prop:upper-tail}; for large \(n\), this event has positive probability,
giving the deterministic statement.
\end{proof}

At \(\alpha=2\), the exponential base reaches the candidate value
\(f_2=1/2\). Resolving the comparison with \(\tau_n\) requires better
estimates.

\section{Consequences for simplex volumes}
\label{sec:consequence}

For \(1\le j\le n\), let \(\operatorname{vol}_j\) be the normalised
\(j\)-dimensional Hausdorff measure that agrees with ordinary Lebesgue
measure on every \(j\)-dimensional affine subspace.  Following
\citet[Definition~4.2]{GonzalezMerinoSchymura2020}, define
\begin{equation}\label{eq:DR-definition}
\begin{aligned}
 \DR(m,n,j)=\sup\{\nu\ge0: {}&
  \forall (u,\lambda)\text{ satisfying \cref{eq:identity-decomposition}},\\
 &\exists B\subseteq[m],\quad |B|=j,\quad\text{such that}\\[-1mm]
 &\operatorname{vol}_j\operatorname{conv}
  (\{0\}\cup\{u_i:i\in B\})\ge\nu\}.
\end{aligned}
\end{equation}
\Needspace{8\baselineskip}
\begin{proposition}\label{prop:DR-D-bridge}
For \(1\le n\le m\), \(\DR(m,n,n)\) satisfies
\cref{eq:DR-D-relation}.
\end{proposition}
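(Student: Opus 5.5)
The relation follows by translating volumes into Gram determinants selection by selection, and then matching the supremum in the definition of \(\DR\) with the infimum defining \(\cD\).

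\textbf{Volume--determinant identity.} For unit vectors \(v_1,\ldots,v_n\in\mathbb R^n\), the simplex \(\operatorname{conv}\{0,v_1,\ldots,v_n\}\) has \(n\)-dimensional volume \(|\det(v_1,\ldots,v_n)|/n!\). Since \(\det\Gram(v_1,\ldots,v_n)=\det(V^{\top}V)=\det(V)^2\) for the matrix \(V\) with columns \(v_i\), this volume equals \(\sqrt{c_B}/n!\) when \(v_i=u_i\), \(i\in B\). Degenerate selections have volume \(0\) and \(c_B=0\), so the identity holds for every \(n\)-set \(B\). Hence, for each admissible \((u,\lambda)\),
\[
 \max_{|B|=n}\operatorname{vol}_n\operatorname{conv}(\{0\}\cup\{u_i:i\in B\})=\frac{\sqrt{\cM(u)}}{n!}.
\]
The maximum is attained because there are finitely many selections.

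\textbf{Matching the extremal problems.} Unwinding \cref{eq:DR-definition}, a number \(\nu\ge0\) is admissible exactly when \(\sqrt{\cM(u)}/n!\ge\nu\) for every decomposition satisfying \cref{eq:identity-decomposition}. The admissible set is therefore the interval \([0,\inf_u\sqrt{\cM(u)}/n!]\), and this interval is nonempty because \(0\) is admissible. Its supremum is \(\inf_u\sqrt{\cM(u)}/n!\).

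Since \(t\mapsto\sqrt t/n!\) is continuous and increasing on \([0,\infty)\), it commutes with the infimum. This gives \(\DR(m,n,n)=\sqrt{\cD(m,n)}/n!\), which is \cref{eq:DR-D-relation}. The infimum in \cref{eq:Dmn-definition} is taken over a nonempty set, since for example the standard basis padded with zero-weight copies is admissible when \(m\ge n\). Moreover, \(\cD(m,n)\) is finite and lies in \([0,1]\) by Hadamard's inequality, so all quantities are well defined.

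\textbf{Main obstacle.} Nothing here is deep. The only points needing care are these:
\begin{itemize}
\item \(\operatorname{vol}_n\) coincides with Lebesgue measure in full dimension.
\item Ties and degenerate selections need no special treatment.
\item The supremum and infimum are interchanged through a monotone continuous map, even though neither extremum need be attained.
\end{itemize}
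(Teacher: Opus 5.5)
Your proposal is correct and follows the paper's own route. You use the selection-wise identity \(\operatorname{vol}_n\operatorname{conv}(\{0\}\cup\{u_i:i\in B\})=|\det(u_i:i\in B)|/n!=\sqrt{c_B}/n!\), then take a maximum over \(B\) and an infimum over admissible decompositions; your explicit handling of the supremum in the definition of \(\DR\) and of commuting \(t\mapsto\sqrt t/n!\) with the infimum spells out what the paper compresses into its final sentence.
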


\begin{proof}
For every \(n\)-set \(B\),
\[
 \operatorname{vol}_n\operatorname{conv}(\{0\}\cup\{u_i:i\in B\})
 =\frac{|\det(u_i:i\in B)|}{n!}
 =\frac{\sqrt{c_B}}{n!}.
\]
Taking maxima over \(B\), and then infima over admissible decompositions,
gives \cref{eq:DR-D-relation}.
\end{proof}

\subsection{Adding labels without changing the maximum}

We pad a decomposition to a larger number of labels by splitting weights
among parallel copies, preserving its maximal determinant.

\begin{lemma}\label{lem:padding}
Suppose \cref{eq:identity-decomposition} has \(m\le M\) labels.  There is an
\(M\)-label unit decomposition of the identity, with positive weights if
the original weights are positive, whose maximal Gram determinant is
exactly \(\cM(u)\).
\end{lemma}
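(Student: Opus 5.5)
We split a single label into several parallel copies, dividing its weight among them. Set \(k=M-m\); if \(k=0\) there is nothing to prove. Assume \(k\ge1\). Since \(\sum_i\lambda_i=n>0\), some weight is positive, say \(\lambda_1>0\) after relabelling. Define the new family \(u'_1,\ldots,u'_M\) by \(u'_i=u_i\) for \(i\le m\) and \(u'_{m+j}=u_1\) for \(1\le j\le k\). Define the weights by \(\lambda'_1=\lambda_1/(k+1)\), \(\lambda'_{m+j}=\lambda_1/(k+1)\), and \(\lambda'_i=\lambda_i\) for \(2\le i\le m\). Then
\[
 \sum_{i=1}^M\lambda'_iu'_iu'^{\top}_i
 =\sum_{i=1}^m\lambda_iu_iu_i^{\top}=I_n,
\]
so \((u',\lambda')\) satisfies \cref{eq:identity-decomposition}. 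All new weights are positive whenever the original ones are. Labels stay distinct even though directions repeat, as the convention in the introduction allows.

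Next we compare maximal Gram determinants. Every \(n\)-set \(B\subseteq[m]\) is also an \(n\)-set of \([M]\) with the same vectors, so \(\cM(u')\ge\cM(u)\). This requires \(m\ge n\), which holds because the \(u_i\) span \(\mathbb R^n\). For the reverse inequality, take an \(n\)-set \(B'\subseteq[M]\) and split into two cases.

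\emph{Case 1:} \(B'\) contains two labels carrying the direction \(u_1\). Then two selected vectors coincide, the Gram matrix is singular, and \(c_{B'}=0\le\cM(u)\).

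\emph{Case 2:} \(B'\) contains at most one such label. Replace that label, if present, by label \(1\). This gives an \(n\)-set \(B\subseteq[m]\) with the same multiset of vectors, so \(c_{B'}=c_B\le\cM(u)\).

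Hence \(\cM(u')=\cM(u)\).

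The only point needing care is the bookkeeping of duplicated labels in Case 1, namely that a selection repeating a direction has zero Gram determinant. Everything else is direct verification.
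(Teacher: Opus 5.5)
Your proof is correct and is essentially the paper's argument: you split one positive-weight label into \(M-m+1\) parallel copies (equally, where the paper allows any positive split), note that a selection containing two copies has vanishing Gram determinant, and observe that every other selection has the same vectors as an original \(n\)-set. Your explicit check that \(m\ge n\), which is needed for the inclusion \(\cM(u')\ge\cM(u)\) to be meaningful, is a small detail that the paper leaves implicit.
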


\begin{proof}
Taking traces shows that at least one weight, say \(\lambda_{i_0}\), is
positive.  Put \(k=M-m+1\), choose positive numbers
\(\lambda_{i_0,1},\ldots,\lambda_{i_0,k}\) summing to
\(\lambda_{i_0}\), and replace the label \(i_0\) by \(k\) parallel copies:
\[
 \lambda_{i_0,1}u_{i_0}u_{i_0}^{\top}+\cdots+
 \lambda_{i_0,k}u_{i_0}u_{i_0}^{\top},
 \qquad
 \sum_{a=1}^k\lambda_{i_0,a}=\lambda_{i_0}.
\]
The identity decomposition is unchanged.  Let \(\pi\) map each new label to
its original label.  If \(\pi\) is not injective on an \(n\)-set of new
labels, that set contains two parallel copies and its Gram determinant is
zero.  If \(\pi\) is injective, its Gram determinant is the determinant of
the original \(n\)-set \(\pi(B)\).  Conversely, every original \(n\)-set is
recovered by choosing one representative above each of its labels.  Hence
the maximum is unchanged.  If every original weight is positive, every new
weight is positive as well.
\end{proof}

\subsection{The limit for diverging redundancy}

Padding allows us to apply \cref{thm:main} with \(\alpha\) held fixed as
\(n\to\infty\).  We can then let \(\alpha\to\infty\).

\begin{proof}[Proof of \cref{cor:diverging-scale}]
\Cref{lem:padding} implies the monotonicity
\[
 \cD(M,n)\le\cD(m,n)\qquad(n\le m\le M).
\]
Every admissible \(m\)-label decomposition has an \(M\)-label padding with
the same maximum; taking infima gives the displayed monotonicity.

For the lower bound, \(\binom{r_n}{n}\le r_n^n/n!\), so
\cref{prop:universal-floor} gives
\[
 \cD(r_n,n)\ge\frac{n!}{n^n}.
\]
Stirling's formula shows that the \(n\)-th root of the right side tends to
\(e^{-1}\).

For the upper bound, fix \(\alpha>1\).  Since
\(\lfloor\alpha n\rfloor\le r_n\) for all sufficiently large \(n\), padding
monotonicity and \cref{thm:main} yield
\[
 \limsup_{n\to\infty}\cD(r_n,n)^{1/n}\le f_\alpha.
\]
Moreover,
\[
 \log f_\alpha
 = (\alpha-1)\log\left(1-\frac1\alpha\right)
 \longrightarrow-1
 \qquad(\alpha\to\infty).
\]
Letting \(\alpha\to\infty\) gives the matching upper limit \(e^{-1}\) and
proves the determinant limit in \cref{cor:diverging-scale}.  By
\cref{eq:DR-D-relation},
\[
 \bigl(n!\,\DR(r_n,n,n)\bigr)^{1/n}
 =\bigl(\cD(r_n,n)^{1/n}\bigr)^{1/2},
\]
which proves the second limit.

Now let \(b>e^{-1}\).  Choose a sufficiently large \(\alpha\ge2\) and
\(b'\) such that \(f_\alpha<b'<b\).  By \cref{prop:upper-tail} and the
almost-sure positivity in \cref{lem:leverage}, all sufficiently large
\(n\) admit a positive-weight
\(\lfloor\alpha n\rfloor\)-label decomposition with
\(\cM(u)<(b')^n<b^n\).  Since \(\lfloor\alpha n\rfloor\le r_n\) eventually,
padding it to \(r_n\) labels provides the required positive-weight
decomposition.
\end{proof}

\subsection{Failure of Gonz\'alez Merino--Schymura Conjecture~4.8}

The value in \cref{eq:GMS-conjecture} comes from the \(A_n\) root
decomposition.  In \(\boldsymbol1^\perp\subseteq\mathbb R^{n+1}\), take
\(u_{ij}=(e_i-e_j)/\sqrt2\), \(1\le i<j\le n+1\), with weight
\(2/(n+1)\).  Its independent
\(n\)-sets are the spanning trees of \(K_{n+1}\), and the matrix--tree
theorem gives Gram determinant \((n+1)/2^n\) for each such set.  Hence
\(\cD(d_n,n)\le\tau_n\); the conjecture asserts equality.
The diverging-redundancy limit now gives a strictly smaller exponential
rate.

\begin{proof}[Proof of \cref{cor:GMS}]
Since \(d_n/n=(n+1)/2\to\infty\), taking \(r_n=d_n\) proves
the two limits in \cref{cor:GMS} by \cref{cor:diverging-scale}.

Moreover, \(\tau_n^{1/n}=(n+1)^{1/n}/2\to1/2\), whereas
\(\cD(d_n,n)^{1/n}\to e^{-1}<1/2\).  Hence
\(\cD(d_n,n)<\tau_n\) for all sufficiently large \(n\).  The bridge in
\cref{prop:DR-D-bridge} now gives the strict inequality in \cref{cor:GMS}.
\end{proof}

\Cref{cor:GMS} shows that Gonz\'alez Merino--Schymura Conjecture~4.8 fails
in every sufficiently large dimension.

\clearpage
\phantomsection

\clearpage
\appendix
\section{Makai's conjecture from OpenAI's strong-slicing result}
\label[appendix]{app:makai}

We derive Makai's reverse isodiametric conjecture from the strong-slicing
theorem of \citet[Theorem~1.1]{OpenAI2026StrongSlicing}.  After making the
covariance matrix the identity, Izmestiev's width--variance estimate
\citep[Lemma~6.4]{Izmestiev2014} bounds the diameter, while the isotropic
constant determines the volume.  This gives the required affine
position.  We then relate the resulting bound to a determinant formed
from the chords of the body.

\subsection{Strong slicing and Makai's conjecture}

Fix \(n\ge1\).  Throughout this appendix, a convex body is a compact
convex subset of \(\mathbb R^n\) with nonempty interior.  Write
\(V(K)=\operatorname{vol}_n(K)\), \(D(K)=\operatorname{diam}(K)\), and
\[
 \mathcal B_n(K):=\sup_{A\in\operatorname{GL}_n(\mathbb R)}
                  \frac{V(AK)}{D(AK)^n},
 \qquad
 c_n:=\frac{\sqrt{n+1}}{2^{n/2}n!}.
\]
Translations leave volume and diameter unchanged, so \(\mathcal B_n\)
is invariant under invertible affine maps.

Let \(b_K\) and \(\Sigma_K\) denote the centroid and covariance matrix
of the uniform probability measure on \(K\).  Thus
\[
 b_K=\frac1{V(K)}\int_K x\,dx,
 \qquad
 \Sigma_K=\frac1{V(K)}\int_K(x-b_K)(x-b_K)^{\top}\,dx.
\]
The isotropic constant is defined by
\[
 L_K^{2n}:=\frac{\det\Sigma_K}{V(K)^2}.
\]

Let \(\Delta_n\) be an \(n\)-simplex.  The strong-slicing theorem stated
in \citet[Theorem~1.1]{OpenAI2026StrongSlicing} gives
\begin{equation}\label{eq:strong-slicing-input}
 L_K\le L_{\Delta_n}
 :=\frac{(n!)^{1/n}}{(n+1)^{(n+1)/(2n)}\sqrt{n+2}},
\end{equation}
with equality if and only if \(K\) is a simplex.  We use this bound to
obtain the following form of Makai's conjecture.

\begin{theorem}[Makai from Strong Slicing]\label{thm:makai-from-slicing}
Assume \eqref{eq:strong-slicing-input} together with its equality
statement.  Then every convex body
\(K\subset\mathbb R^n\) admits an invertible linear map \(A\) such that
\[
 \frac{V(AK)}{D(AK)^n}\ge c_n.
\]
Moreover, \(\mathcal B_n(K)=c_n\) if and only if \(K\) is a simplex.
\end{theorem}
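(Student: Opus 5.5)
The plan is to use the covariance normalisation described at the start of the appendix. Translate and apply a linear map so that \(b_K=0\) and \(\Sigma_K=I_n\). This is possible because \(\Sigma_K\) is positive definite for a convex body, and \(A=\Sigma_K^{-1/2}\) will be the linear map in the statement. In this position the definition of the isotropic constant gives \(V(AK)^2=\det\Sigma_{AK}/L_{AK}^{2n}=L_K^{-2n}\), since \(L\) is affine invariant. Then \eqref{eq:strong-slicing-input} yields
\[
 V(AK)\ge L_{\Delta_n}^{-n}=\frac{(n+1)^{(n+1)/2}(n+2)^{n/2}}{n!}.
\]

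For the diameter, I will invoke Izmestiev's width--variance estimate \citep[Lemma~6.4]{Izmestiev2014} in the following form. If \(\Sigma_K=I_n\), then every chord satisfies \(|x-y|^2\le 2(n+1)(n+2)\), i.e. \(D(AK)^2\le2(n+1)(n+2)\). The constant is forced by the regular simplex. Its covariance equals \(\tfrac{1}{(n+1)(n+2)}\sum_i v_iv_i^{\top}=\tfrac{\ell^2}{2(n+1)(n+2)}I_n\) for edge length \(\ell\), so \(\ell^2=2(n+1)(n+2)\) when the covariance is \(I_n\). Combining the two bounds gives
\[
 \frac{V(AK)}{D(AK)^n}\ge
 \frac{(n+1)^{(n+1)/2}(n+2)^{n/2}}{n!\,2^{n/2}(n+1)^{n/2}(n+2)^{n/2}}
 =\frac{\sqrt{n+1}}{2^{n/2}n!}=c_n .
\]

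For the equality statement I will prove two directions.

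\emph{If \(K\) is not a simplex.} The strict form of \eqref{eq:strong-slicing-input} makes the volume bound strict, so \(\mathcal B_n(K)>c_n\).

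\emph{If \(K\) is a simplex.} Here I need the reverse inequality \(\mathcal B_n(K)\le c_n\), i.e. \(V(S)\le c_nD(S)^n\) for every simplex \(S\). I will prove this directly and not rely on Izmestiev.
\begin{itemize}
 \item Write \(v_0,\ldots,v_n\) for the vertices of \(S\). The covariance formula gives
 \[
  \operatorname{tr}\Sigma_S=\frac{1}{(n+1)^2(n+2)}\sum_{i<j}|v_i-v_j|^2\le\frac{n D(S)^2}{2(n+1)(n+2)}.
 \]
 \item AM--GM gives \(\det\Sigma_S\le(\operatorname{tr}\Sigma_S/n)^n\).
 \item All simplices have the same isotropic constant \(L_{\Delta_n}\), so \(V(S)^2=\det\Sigma_S/L_{\Delta_n}^{2n}\).
 \item Substituting these bounds reproduces exactly \(V(S)\le c_nD(S)^n\).
\end{itemize}
Since every linear image of a simplex is again a simplex, this gives \(\mathcal B_n(K)=c_n\), with equality attained by the regular position.

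The main obstacle is matching the precise hypotheses and constant in Izmestiev's Lemma~6.4 to the diameter statement needed here. It may be phrased through the variance of linear functionals, i.e. widths, rather than chords. If so, I would first try to apply it to the direction \(x-y\) of a diametral chord: the width of \(K\) in that direction is at least \(|x-y|\), and the variance of \(\langle\cdot,(x-y)/|x-y|\rangle\) equals \(1\) when \(\Sigma_K=I_n\). That reduces the claim to a one-dimensional width--variance inequality, with the simplex as the extremal case.
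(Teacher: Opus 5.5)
Your proposal is correct and follows essentially the paper's proof. The paper also passes to covariance-identity position, applies Izmestiev's lemma in its width form \(u^{\top}\Sigma_K u\ge w_K(u)^2/(2(n+1)(n+2))\) to the direction of an arbitrary chord exactly as in your fallback, uses the strict slicing inequality for non-simplices, and proves the simplex bound by the trace and AM--GM argument; there it uses the identity \(\det G=(n!)^2V(S)^2/(n+1)\) for the centred-vertex matrix \(G\), rather than your route through \(\Sigma_S=G/((n+1)(n+2))\) and the value of \(L_{\Delta_n}\), which amounts to the same computation.
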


\subsection{Covariance normalisation}

We first control the diameter in covariance-normalised position.  For
a unit vector \(u\), write
\[
 w_K(u):=\max_{x\in K}\langle x,u\rangle
          -\min_{x\in K}\langle x,u\rangle.
\]
Izmestiev's estimate \citep[Lemma~6.4]{Izmestiev2014} gives
\begin{equation}\label{eq:makai-width-variance}
 u^{\top}\Sigma_Ku\ge\frac{w_K(u)^2}{2(n+1)(n+2)}.
\end{equation}
For \(n=1\), this is the variance formula for the uniform distribution
on an interval.

\begin{proposition}[Covariance Normalisation]\label{prop:makai-covariance}
Every convex body \(K\subset\mathbb R^n\) admits an invertible affine
map \(T\) such that
\[
 D(TK)\le1,
 \qquad
 V(TK)=c_n\left(\frac{L_{\Delta_n}}{L_K}\right)^n.
\]
Consequently,
\begin{equation}\label{eq:makai-covariance-bound}
 \mathcal B_n(K)\ge
 c_n\left(\frac{L_{\Delta_n}}{L_K}\right)^n.
\end{equation}
\end{proposition}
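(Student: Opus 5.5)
The plan is to put \(K\) in a position where its covariance matrix is a scalar multiple of the identity, with the scalar chosen so that Izmestiev's estimate \eqref{eq:makai-width-variance} forces every width, and hence the diameter, to be at most \(1\). The volume in that position is then determined by the isotropic constant.

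\textbf{Normalisation.} First translate so that \(b_K=0\). Since \(K\) has nonempty interior, \(\Sigma_K\) is positive definite. Put \(\sigma^2=1/(2(n+1)(n+2))\) and \(T=\sigma\Sigma_K^{-1/2}\), composed with the translation. For a linear map \(A\), one has \(\Sigma_{AK}=A\Sigma_KA^{\top}\) and \(V(AK)=|\det A|V(K)\). Consequently \(\det\Sigma_{AK}/V(AK)^2=\det\Sigma_K/V(K)^2\), so \(L_K\) is affine invariant and \(\Sigma_{TK}=\sigma^2I_n\).

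\textbf{Diameter.} Apply \eqref{eq:makai-width-variance} to \(TK\). For every unit vector \(u\),
\[
 w_{TK}(u)^2\le 2(n+1)(n+2)\,u^{\top}\Sigma_{TK}u=1.
\]
The diameter of a convex body equals its maximal width: if \(x,y\in TK\) realise the diameter, take \(u=(x-y)/\lVert x-y\rVert\), so that \(D(TK)\le w_{TK}(u)\). Hence \(D(TK)\le1\).

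\textbf{Volume.} From \(L_{TK}^{2n}=\det\Sigma_{TK}/V(TK)^2=\sigma^{2n}/V(TK)^2\) and \(L_{TK}=L_K\), we get
\[
 V(TK)=\frac{\sigma^n}{L_K^n}=\bigl(2(n+1)(n+2)\bigr)^{-n/2}L_K^{-n}.
\]
A direct computation from the formula for \(L_{\Delta_n}\) gives
\[
 c_nL_{\Delta_n}^n
 =\frac{\sqrt{n+1}}{2^{n/2}n!}\cdot\frac{n!}{(n+1)^{(n+1)/2}(n+2)^{n/2}}
 =\bigl(2(n+1)(n+2)\bigr)^{-n/2}.
\]
This is the claimed identity \(V(TK)=c_n(L_{\Delta_n}/L_K)^n\).

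\textbf{Consequence.} Write \(T=A+t\) with \(A\) linear. Translations change neither volume nor diameter, so
\[
 \mathcal B_n(K)\ge\frac{V(AK)}{D(AK)^n}=\frac{V(TK)}{D(TK)^n}\ge V(TK).
\]
The last step uses \(D(TK)\le1\). This proves \eqref{eq:makai-covariance-bound}.

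The only external ingredient is Izmestiev's inequality, which we take as given. The main point needing care is the bookkeeping of the affine invariance of \(L_K\) and the constant identity. Neither is a genuine obstacle.
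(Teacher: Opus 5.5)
Your proposal is correct and follows essentially the same route as the paper. It uses the same map \(T(x)=a_n^{-1/2}\Sigma_K^{-1/2}(x-b_K)\) with \(a_n=2(n+1)(n+2)\), Izmestiev's width--variance bound to get \(D(TK)\le1\), and the same constant identity \(a_n^{n/2}L_{\Delta_n}^n=c_n^{-1}\). The only cosmetic difference is that the paper first passes through the identity-covariance body \(\Sigma_K^{-1/2}(K-b_K)\) and then rescales, whereas you apply the estimate directly to \(TK\) and read off the volume from the affine invariance of \(L_K\).
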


\begin{proof}
Since \(K\) has nonempty interior, \(\Sigma_K\) is positive definite.
Put \(a_n=2(n+1)(n+2)\) and
\(E=\Sigma_K^{-1/2}(K-b_K)\).  A change of variables gives
\[
 \operatorname{Cov}(\operatorname{Unif}(E))=I_n,
 \qquad
 V(E)=\frac{V(K)}{\sqrt{\det\Sigma_K}}=L_K^{-n}.
\]
The covariance identity turns \eqref{eq:makai-width-variance} into a
uniform bound on the widths of \(E\).  For distinct \(x,y\in E\), take
\(u=(x-y)/\|x-y\|\).  Then
\(\|x-y\|\le w_E(u)\), and \eqref{eq:makai-width-variance} gives
\(w_E(u)^2\le a_n\).  Hence \(D(E)\le\sqrt{a_n}\).

We now rescale \(E\) to make its diameter at most one.  The affine map
\[
 T(x)=a_n^{-1/2}\Sigma_K^{-1/2}(x-b_K)
\]
therefore satisfies \(D(TK)\le1\) and
\[
 V(TK)=a_n^{-n/2}L_K^{-n}
      =c_n\left(\frac{L_{\Delta_n}}{L_K}\right)^n,
\]
where we have used
\[
 a_n^{n/2}L_{\Delta_n}^{\,n}
 =\frac{2^{n/2}n!}{\sqrt{n+1}}=c_n^{-1}.
\]
Translations do not affect volume or diameter, so the linear part
\(A=a_n^{-1/2}\Sigma_K^{-1/2}\) satisfies
\[
 \mathcal B_n(K)\ge\frac{V(AK)}{D(AK)^n}
 =\frac{V(TK)}{D(TK)^n}\ge V(TK).
\]
This proves \eqref{eq:makai-covariance-bound}.
\end{proof}

To settle the equality case, we need the maximal volume of a simplex
of fixed diameter.  The sum of the outer products of the centred
vertices gives this bound: its trace is controlled by the pairwise
distances, and its determinant is a constant multiple of the squared
volume.

\begin{lemma}[The Simplex Case]\label{lem:makai-simplex}
Every \(n\)-simplex \(S\subset\mathbb R^n\) satisfies
\(V(S)\le c_nD(S)^n\), and \(\mathcal B_n(S)=c_n\).
\end{lemma}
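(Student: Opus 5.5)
Let \(S=\operatorname{conv}\{v_0,\ldots,v_n\}\) with centroid \(g=\frac1{n+1}\sum_i v_i\), and put \(w_i=v_i-g\).  The plan follows the hint preceding the lemma and works with the positive semidefinite matrix
\[
 P=\sum_{i=0}^n w_iw_i^{\top}.
\]
We relate its trace to the edge lengths and its determinant to the volume, then apply AM--GM to the eigenvalues of \(P\).

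\textbf{Trace.}  Since \(\sum_i w_i=0\), the identity
\[
 \sum_{i<j}\|v_i-v_j\|^2=(n+1)\sum_i\|w_i\|^2
\]
gives \(\operatorname{tr}P=\frac1{n+1}\sum_{i<j}\|v_i-v_j\|^2\le \frac{n}{2}D(S)^2\), because there are \(\binom{n+1}2\) edges, each of length at most \(D(S)\).

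\textbf{Determinant.}  Let \(W\) be the \(n\times(n+1)\) matrix with columns \(w_i\).  Cauchy--Binet gives \(\det P=\det(WW^{\top})=\sum_k\det(W^{(k)})^2\), where \(W^{(k)}\) deletes column \(k\).  Each \(n\)-subfamily of the \(w_i\) spans, together with the centroid, a simplex of volume \(V(S)/(n+1)\), since \(g\) splits \(S\) into \(n+1\) pieces of equal volume.  Hence \(|\det W^{(k)}|=n!\,V(S)/(n+1)\) for every \(k\), and
\[
 \det P=(n+1)\Bigl(\frac{n!\,V(S)}{n+1}\Bigr)^2=\frac{(n!)^2V(S)^2}{n+1}.
\]
This is the computation I expect to need the most care.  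If the equal-volume splitting is delicate, an alternative is to write \(P=V_0(I-\frac1{n+1}J)V_0^{\top}\) in barycentric form and verify the constant on the regular simplex.

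\textbf{AM--GM.}  Applying AM--GM to the eigenvalues of \(P\) gives \(\det P\le(\operatorname{tr}P/n)^n\le (D(S)^2/2)^n\).  Therefore
\[
 V(S)^2\le\frac{(n+1)D(S)^{2n}}{2^n(n!)^2},
\]
which is exactly \(V(S)\le c_nD(S)^n\).  Equality holds for the regular simplex: all edges are equal, and \(P\) is a multiple of \(I_n\) by symmetry.

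\textbf{Conclusion.}  Every linear image of a simplex is a simplex, so the bound gives \(\mathcal B_n(S)\le c_n\).  Every simplex is an affine image of the regular one, and \(\mathcal B_n\) is affine invariant.  It follows that \(\mathcal B_n(S)=c_n\), attained by the linear part of the affine map carrying \(S\) to a regular simplex.
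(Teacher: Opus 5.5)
Your proof is correct and follows the paper's argument step for step. Both use the centred-vertex scatter matrix, the trace bound from the pairwise-distance identity, the formula $\det P=(n!)^2V(S)^2/(n+1)$, AM--GM on the eigenvalues, and affine invariance with the regular simplex to get $\mathcal B_n(S)=c_n$; the one difference is that you compute $\det P$ via Cauchy--Binet and the equal-volume splitting of $S$ at its centroid, whereas the paper factors $P=Y(I_n-\mathbf 1\mathbf 1^{\top}/(n+1))Y^{\top}$ through the edge matrix $Y$ (the alternative you mention), and both computations are valid.
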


\begin{proof}
Let \(p_0,\ldots,p_n\) be the vertices of \(S\), and put
\(v_i=p_i-(\sum_jp_j)/(n+1)\) and
\(G=\sum_i v_iv_i^{\top}\).  The identity \(\sum_i v_i=0\) gives
\[
 (n+1)\operatorname{tr}G=\sum_{i<j}\|p_i-p_j\|^2
 \le\frac{n(n+1)}2D(S)^2.
\]
To express the determinant in terms of volume, let \(Y\) have columns
\(p_1-p_0,\ldots,p_n-p_0\), and let
\(\mathbf1\in\mathbb R^n\) be the all-ones vector.  Then
\[
 G=Y\left(I_n-\frac{\mathbf1\mathbf1^{\top}}{n+1}\right)Y^{\top},
 \qquad
 \det G=\frac{(n!)^2}{n+1}V(S)^2.
\]
The arithmetic--geometric mean inequality for the eigenvalues of \(G\)
now yields
\[
 V(S)^2=\frac{n+1}{(n!)^2}\det G
 \le\frac{n+1}{(n!)^2}
       \left(\frac{\operatorname{tr}G}{n}\right)^n
 \le c_n^2D(S)^{2n}.
\]
This proves the volume bound.  Every invertible affine image of \(S\)
is again a simplex, so \(\mathcal B_n(S)\le c_n\).  A regular simplex
of edge length one has \(G=I_n/2\) and volume \(c_n\).  Since every
\(n\)-simplex is affinely equivalent to it, the reverse inequality
also holds.
\end{proof}

\begin{proof}[Proof of \cref{thm:makai-from-slicing}]
The linear map constructed in \cref{prop:makai-covariance} satisfies
\[
 \frac{V(AK)}{D(AK)^n}\ge
 c_n\left(\frac{L_{\Delta_n}}{L_K}\right)^n\ge c_n,
\]
where the last inequality follows from \eqref{eq:strong-slicing-input}.
If \(K\) is not a simplex, the equality statement in the strong-slicing
theorem gives \(L_K<L_{\Delta_n}\), and hence
\(\mathcal B_n(K)>c_n\).  Conversely, \cref{lem:makai-simplex} gives
\(\mathcal B_n(K)=c_n\) for every simplex.
\end{proof}

\subsection{The chord determinant}

The quotient \(\mathcal B_n(K)\) also has a determinant description.
Instead of choosing an affine image, we maximise the determinant of a
weighted Gram matrix formed from the chords of \(K\).  Define
\[
 \mathcal Q_n(K):=
 \left\{\sum_{j=1}^m\lambda_j u_ju_j^{\top}:
 m\ge1,\ u_j\in K-K,\ \lambda_j\ge0,\ \sum_j\lambda_j=n\right\}.
\]
Set
\[
 \kappa_n(K):=\max_{Q\in\mathcal Q_n(K)}\det Q,
 \qquad
 F_n(K):=\sqrt{\kappa_n(K)}.
\]
Since \(K-K\) is compact, so is
\(\mathcal Q_n(K)=n\operatorname{conv}\{uu^{\top}:u\in K-K\}\).
The nonempty interior of \(K\) supplies \(n\) linearly independent
chords.  Thus \(\mathcal Q_n(K)\) contains a positive definite matrix,
and \(F_n(K)>0\).

\begin{proposition}[Chord Determinant Formula]\label{prop:makai-chord-determinant}
For every convex body \(K\subset\mathbb R^n\),
\[
 \mathcal B_n(K)=\frac{V(K)}{F_n(K)}.
\]
\end{proposition}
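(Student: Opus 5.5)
The plan is to reduce the supremum over linear images to a determinant–trace duality over the compact convex set \(\mathcal Q_n(K)\). The basic observation is that for \(A\in\operatorname{GL}_n(\mathbb R)\) the diameter of \(AK\) is attained on chords:
\[
 D(AK)^2=\max_{u\in K-K}\|Au\|^2=\max_{u\in K-K}\operatorname{tr}(Puu^{\top}),
 \qquad P:=A^{\top}A .
\]
The right-hand side is the maximum of a linear functional, so it equals its maximum over the convex hull. Hence
\[
 D(AK)^2=\frac1n\max_{Q\in\mathcal Q_n(K)}\operatorname{tr}(PQ).
\]
We also have \(V(AK)=V(K)\det(P)^{1/2}\). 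Therefore
\[
 \mathcal B_n(K)=V(K)\sup_{P\succ0}\frac{\det(P)^{1/2}}{\bigl(\tfrac1n\max_{Q\in\mathcal Q_n(K)}\operatorname{tr}(PQ)\bigr)^{n/2}},
\]
and it suffices to show that this supremum equals \(\kappa_n(K)^{-1/2}\). The ratio is scale invariant in \(P\), and every positive definite \(P\) arises as \(A^{\top}A\).

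\textbf{Upper bound.} Fix \(P\succ0\) and let \(Q^\ast\in\mathcal Q_n(K)\) attain \(\kappa_n(K)\). As noted before the statement, \(Q^\ast\) exists by compactness and is positive definite. The eigenvalues of \(P^{1/2}Q^\ast P^{1/2}\) are nonnegative, so the arithmetic–geometric mean inequality gives
\[
 \frac1n\operatorname{tr}(PQ^\ast)\ge\det(PQ^\ast)^{1/n}=\bigl(\det P\cdot\kappa_n(K)\bigr)^{1/n}.
\]
Raising this to the power \(n/2\) shows that the ratio is at most \(\kappa_n(K)^{-1/2}\) for every \(P\).

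\textbf{Lower bound.} Choose \(P=(Q^\ast)^{-1}\), that is, \(A=(Q^\ast)^{-1/2}\). Then \(\det(P)^{1/2}=\kappa_n(K)^{-1/2}\), and it remains to show that
\[
 \operatorname{tr}\bigl((Q^\ast)^{-1}Q\bigr)\le n \quad\text{for every } Q\in\mathcal Q_n(K),
\]
which gives \(D(AK)\le1\). This first-order optimality condition is the only genuinely nontrivial step, and I expect it to be the main obstacle.

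To prove it, take any \(Q\in\mathcal Q_n(K)\). By convexity, \(Q_t=(1-t)Q^\ast+tQ\) lies in \(\mathcal Q_n(K)\) for \(t\in[0,1]\), and \(Q_t\) stays positive definite for small \(t\). The function \(g(t)=\log\det Q_t\) is therefore defined near \(0\) and maximised at \(t=0\) over \([0,1]\). Hence
\[
 0\ge g'(0^+)=\operatorname{tr}\bigl((Q^\ast)^{-1}(Q-Q^\ast)\bigr)=\operatorname{tr}\bigl((Q^\ast)^{-1}Q\bigr)-n,
\]
using Jacobi's formula for the derivative of \(\log\det\).

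\textbf{Conclusion.} The upper and lower bounds together show that the supremum equals \(\kappa_n(K)^{-1/2}=1/F_n(K)\). This proves \(\mathcal B_n(K)=V(K)/F_n(K)\), and shows that the supremum is attained at \(A=(Q^\ast)^{-1/2}\).
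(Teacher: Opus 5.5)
Your proof is correct and follows essentially the same route as the paper. The upper bound is the same AM--GM/trace inequality: the paper phrases it as $F_n(AK)=|\det A|\,F_n(K)\le D(AK)^n$, while you phrase it via $P=A^{\top}A$ and $\tfrac1n\operatorname{tr}(PQ^\ast)\le D(AK)^2$; the lower bound is the same choice $A=(Q^\ast)^{-1/2}$, justified by first-order optimality of $\log\det$ at the maximiser, except that you take segments towards arbitrary $Q\in\mathcal Q_n(K)$ where the paper uses the generators $nuu^{\top}$.
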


\begin{proof}
We first bound \(\mathcal B_n(K)\) from above.  Each
\(Q\in\mathcal Q_n(K)\) is positive semidefinite and satisfies
\(\operatorname{tr}Q\le nD(K)^2\).  Thus
\(\det Q\le(\operatorname{tr}Q/n)^n\le D(K)^{2n}\), giving
\(F_n(K)\le D(K)^n\).  For any invertible linear map \(A\),
\[
 \mathcal Q_n(AK)=A\mathcal Q_n(K)A^{\top},
 \qquad F_n(AK)=|\det A|F_n(K).
\]
Volume transforms by the same factor, so
\[
 \frac{V(AK)}{D(AK)^n}
 \le\frac{V(AK)}{F_n(AK)}=\frac{V(K)}{F_n(K)}.
\]
Taking the supremum over \(A\) proves
\(\mathcal B_n(K)\le V(K)/F_n(K)\).

For the reverse inequality, let \(Q_*\) maximise the determinant on
\(\mathcal Q_n(K)\).  Since \(F_n(K)>0\), the matrix \(Q_*\) is
positive definite.  Fix \(u\in K-K\).  The segment from \(Q_*\) to
\(nuu^{\top}\) lies in \(\mathcal Q_n(K)\), so maximality makes the
one-sided derivative of \(\log\det\) along this segment nonpositive.
Therefore
\[
 0\ge\operatorname{tr}\bigl(Q_*^{-1}(nuu^{\top}-Q_*)\bigr)
   =n\bigl(u^{\top}Q_*^{-1}u-1\bigr).
\]
It follows that every chord of \(Q_*^{-1/2}K\) has length at most one.
Thus
\[
 D(Q_*^{-1/2}K)\le1,
 \qquad
 V(Q_*^{-1/2}K)=\frac{V(K)}{F_n(K)}.
\]
Choosing \(A=Q_*^{-1/2}\) in the definition of \(\mathcal B_n(K)\)
proves the reverse inequality.
\end{proof}

Under the hypotheses of \cref{thm:makai-from-slicing},
\cref{prop:makai-chord-determinant} therefore gives
\[
 V(K)^2\ge\frac{n+1}{2^n(n!)^2}\kappa_n(K),
\]
with equality if and only if \(K\) is a simplex.  This formulation uses
all chords of \(K\), whereas \eqref{eq:GMS-conjecture} concerns
selections from a prescribed unit decomposition.

\end{document}